\def\cal{\mathcal}
\def\C{{\cal C}}
\def\V{{\cal V}}
\def\frak{\mathfrak}
\def\da{{\frak a}}
\def\db{{\frak b}}
\def\dg{{\frak g}}
\def\dgl{\dg\dl}
\def\dh{{\frak h}}
\def\dl{{\frak l}}
\def\dgo{{\frak o}}
\def\dgp{{\frak p}}
\def\ds{{\frak s}}
\def\dsl{\ds\dl}
\def\dsp{\ds\dgp}
\def\du{{\frak u}}
\def\dU{{\frak U}}
\def\osp{\dgo\ds\dgp}
\def\Bbb{\mathbb}
\def\bC{\Bbb C}
\def\bN{\Bbb N}
\def\bP{\Bbb P}
\def\bR{\Bbb R}
\def\bZ{\Bbb Z}
\def\ep{\epsilon}
\def\la{\langle}
\def\lf{\lfloor}
\def\o{\overline}
\def\ot{\otimes}
\def\ra{\rangle}
\def\rf{\rfloor}
\def\t{\tilde}
\def\and{\mbox{\rm \ and\ }}
\def\Bol{\mathop{\rm Bol}\nolimits}
\def\Comp{\mathop{\rm Comp}\nolimits}
\def\Con{\mathop{\rm Con}\nolimits}
\def\CQ{\mathop{\rm CQ}\nolimits}
\def\Diff{\mathop{\rm Diff}\nolimits}
\def\degree{\mathop{\rm degree}\nolimits}
\def\Div{\mathop{\rm Div}\nolimits}
\def\End{\mathop{\rm End}\nolimits}
\def\Ext{\mathop{\rm Ext}\nolimits}
\def\fine{{\mathop{\rm fine}\nolimits}}
\def\Hom{\mathop{\rm Hom}\nolimits}
\def\Ind{\mathop{\rm Ind}\nolimits}
\def\lac{\mathop{\rm lac}\nolimits}
\def\lp{{\lambda,p}}
\def\ndup{{\mbox{\scrm nd}}}
\def\odd{{\mathop{\rm odd}\nolimits}}
\def\oo{{1|1}}
\def\Poly{\mathop{\rm Poly}\nolimits}
\def\PQ{\mathop{\rm PQ}\nolimits}
\def\px{\partial_x}
\def\pxi{\partial_\xi}
\def\Roo{{\bR^{1|1}}}
\def\SBol{\mathop{\rm SBol}\nolimits}
\def\scrm{\scriptsize\rm}
\def\Span{\mathop{\rm Span}\nolimits}
\def\SQ{\mathop{\rm SQ}\nolimits}
\def\Stabilizer{\mathop{\rm Stabilizer}\nolimits}
\def\stup{{\mbox{\scrm st}}}
\def\Symb{\mathop{\rm Symb}\nolimits}
\def\Tan{\mathop{\rm Tan}\nolimits}
\def\thup{{\mbox{\scrm th}}}
\def\ts{\textstyle}
\def\Vec{\mathop{\rm Vec}\nolimits}
\def\VR{\Vec \Bbb R}
\def\VRm{\Vec\Bbb R^m}
\def\VRoo{\Vec\Roo}
\def\cind{completely indecomposable}
\def\dog{differential operator}
\def\eg{{\em e.g.,\/}}
\def\ie{{\em i.e.,\/}}
\def\iff{if and only if}
\def\ind{indecomposable}
\def\irr{irreducible}
\def\lsa{Lie superalgebra}
\def\lwv{lowest weight vector}
\def\psdog{pseudo\dog}
\def\psidog{$\Psi$DO}
\def\r{representation}
\def\sq{subquotient}
\def\tdm{tensor density module}
\def\tfm{tensor field module}
\def\th{\thinspace}
\def\uea{universal enveloping algebra}
\def\vf{vector field}
\def\vs1{$\Vec(S^1)$}
\def\Symbf{\Symb_\fine}
\newtheorem{theorem}{Theorem}[section]
\newtheorem{lemma}[theorem]{Lemma}
\newtheorem{prop}[theorem]{Proposition}
\theoremstyle{definition}
\newtheorem{definition}[theorem]{Definition}
\theoremstyle{remark}
\numberwithin{equation}{section}
\begin{document}

\title{Quantizations of modules of differential operators}

\author{Charles H.\ Conley}
\address{Department of Mathematics, University of North Texas, Denton, Texas 76203, USA}
\email{conley@unt.edu}

\subjclass{Primary 17B66; Secondary 17B56}

\dedicatory{This article is dedicated with admiration and affection to my advisor, \\Professor V.~S.~Varadarajan, on the occasion of his 70$^{\mbox{\scriptsize\it th}}$ birthday}

\keywords{Quantization, Differential operators, Cohomology}


\maketitle

\section*{Introduction}

Fix a manifold $M$, and let $\V$ be an infinite dimensional simple Lie subalgebra of the Lie algebra $\Vec M$ of vector fields on $M$.  Assume that $\V$ contains a finite dimensional simple maximal subalgebra $\da(\V)$.  We define an {\em $\da(\V)$-quantization\/} of a $\V$-module of differential operators on $M$ to be a decomposition of the module into irreducible $\da(\V)$-modules.  In this article we survey some recent results and open problems involving this type of quantization and its applications to cohomology, \ind\ modules, and geometric equivalences and symmetries of differential operator modules.

There are several mathematical theories of quantization.  Two of the most important are geometric quantization, which hinges on polarization and is linked to the orbit method in the \r\ theory of Lie groups, and deformation quantization, in which the classical Poisson algebra structure becomes the first order approximation of an associative star product.

In its original physical sense, to quantize a system meant to replace the commutative Poisson algebra of functions on the phase space, the classical observables, with a noncommutative algebra of operators on a Hilbert space, the quantum mechanical observables.  In the theory of quantization under consideration here, the role of the noncommutative algebra is played by the differential operators and that of the commutative algebra is played by their symbols.

We will consider two cases: the case that $\V$ is all of $\Vec M$, and the case that $M$ is a contact manifold and $\V$ is the Lie algebra $\Con M$ of contact vector fields on $M$.  Our approach is algebraic: we assume that $M$ is a Euclidean manifold $\bR^m$ and we consider only polynomial vector fields.  Thus, writing $D_i$ for $\partial/\partial x_i$ and using the multi-index notation $x^J = x_1^{J_1} \cdots x_m^{J_m}$,
$$ \V \subseteq \VRm := \Span_\bC \bigl\{ x^J D_i: 1\le i\le m, J\in\bN^m \bigr\}. $$

\section{Projective quantizations} \label{PQs}

In the case that $\V$ is all of $\VRm$, we take $\da(\V)$ to be the {\em projective algebra\/} $\da_m$, a copy of $\dsl_{m+1}$.  Writing $E$ for the Euler operator $\sum_1^m x_iD_i$,
$$ \da_m := \Span_\bC \bigl\{ D_i, x_jD_i, x_jE: 1\le i,j \le m \bigr\} \cong \dsl_{m+1}. $$

The $\da_m$-quantizations are called {\it projective quantizations.\/}  The first example is the projective quantization of the associative algebra $\Diff\bR^m$ of polynomial \dog s on $\bR^m$.  Denoting $D_1^{I_1}\cdots D_m^{I_m}$ by $D^I$,
$$ \Diff\bR^m := \Span_\bC \bigl\{ x^JD^I: I,J\in\bN^m \bigr\}. $$

Let us write $\sigma$ for the two-sided action of $\VRm$ on $\Diff\bR^m$.  It is a derivation action which preserves the {\it order filtration\/} $\Diff^k\bR^m$.  The associated \sq s are the {\it symbol modules:}
$$ \Symb^k\bR^m := \Diff^k\bR^m \big/ \Diff^{k-1}\bR^m. $$
Write $\sigma_k$ for the action of $\VRm$ on $\Symb^k\bR^m$, and let $\Symb\bR^m$ be the {\em total symbol module\/} $\bigoplus_{k=0}^\infty \Symb^k \bR^m$, the graded algebra of $\Diff\bR^m$.

\begin{prop} \label{PQ}
There exists a unique $\da_m$-equivalence
$$ \PQ: \Symb\bR^m \to \Diff\bR^m, $$
the\/ {\em projective quantization,\/} which is the identity on symbols.
\end{prop}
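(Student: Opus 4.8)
My plan is to restrict everything to $\da_m\cong\dsl_{m+1}$, regard the order filtration $\Diff^\bullet\bR^m$ as a filtration of $\da_m$-modules, and split it canonically using the Casimir operator $\Omega$ of $\da_m$ (acting on $\Diff\bR^m$ through $\sigma$) as a device that detects the order. Two general features of $\Omega$ set this up. First, since $\Omega$ is central it commutes with the $\da_m$-action, so each of its generalized eigenspaces is a $\da_m$-submodule. Second, the Euler operator $E=\sum x_iD_i$ lies in $\da_m$, so $\Omega$ commutes with $\sigma(E)$ and hence preserves the $\sigma(E)$-eigenspaces of $\Diff\bR^m$; these are the finite dimensional spaces $(\Diff^k\bR^m)_d=\Span_\bC\{x^JD^I:|I|\le k,\ |J|-|I|=d\}$, so $\Omega$ acts locally finitely on each $\Diff^k\bR^m$, hence on $\Diff\bR^m$, with a generalized eigenspace decomposition.

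The crux is the behaviour of $\Omega$ on the symbol modules. I claim that all composition factors of $\Symb^k\bR^m$ share one $\da_m$-central character $\chi_k$, so that $\Omega$ acts on $\Symb^k\bR^m$ with the single generalized eigenvalue $c_k:=\chi_k(\Omega)$, and that $k\mapsto c_k$ is injective on $\bN$. To see the first point one studies $\Symb^k\bR^m$ by lowest-weight theory for $\da_m$ relative to the triangular decomposition $\Span_\bC\{D_i\}\oplus\dgl_m\oplus\Span_\bC\{x_jE\}$, where $\dgl_m=\Span_\bC\{x_jD_i\}$: the bottom of the $\sigma_k(E)$-grading of $\Symb^k\bR^m$ is the space $W_k$ of constant-coefficient symbols $\o{D^I}$ with $|I|=k$, which is annihilated by $\Span_\bC\{D_i\}$ and is irreducible over $\dgl_m$ with highest weight proportional to $k$. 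One verifies, using this grading and the lowest-weight structure, that $\Symb^k\bR^m$ has finite length over $\da_m$ and that $\Omega$ acts by one scalar $c_k$ on every composition factor, namely the Casimir value at the central character of the generalized Verma module induced from $W_k$; as a function of the highest weight of $W_k$ this value is quadratic, hence $c_k$ is a quadratic in $k$, which one checks is strictly increasing. (For $m=1$ it is $c_k=2k(k+1)$.)

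Granting all this, the construction and its uniqueness are formal. Set $\Delta_j=\{v\in\Diff\bR^m:(\Omega-c_j)^Nv=0\text{ for }N\gg0\}$, a $\da_m$-submodule. Since the composition factors of $\Diff^k\bR^m$ all come from $\Symb^0\bR^m,\dots,\Symb^k\bR^m$, local finiteness of $\Omega$ gives $\Diff^k\bR^m=\bigoplus_{j=0}^k(\Delta_j\cap\Diff^k\bR^m)$. Now apply the exact functor ``generalized $c_j$-eigenspace of $\Omega$'' to $0\to\Diff^{j-1}\bR^m\to\Diff^j\bR^m\to\Symb^j\bR^m\to0$ and to $0\to\Diff^j\bR^m\to\Diff^k\bR^m\to\Diff^k\bR^m/\Diff^j\bR^m\to0$: because $\Diff^{j-1}\bR^m$ and $\Diff^k\bR^m/\Diff^j\bR^m$ have no composition factor of central character $\chi_j$, this yields $\Delta_j\cap\Diff^k\bR^m=\Delta_j\cap\Diff^j\bR^m$ for all $k\ge j$ (so that, every element of $\Diff\bR^m$ lying in some $\Diff^k\bR^m$, in fact $\Delta_j\subseteq\Diff^j\bR^m$), and it shows that the order-$j$ symbol map restricts to a $\da_m$-isomorphism $\Delta_j\aweq\Symb^j\bR^m$. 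Taking the union over $k$ gives $\Diff\bR^m=\bigoplus_j\Delta_j$, and defining $\PQ$ on $\Symb^j\bR^m$ to be the inverse of that symbol isomorphism followed by $\Delta_j\hookrightarrow\Diff\bR^m$ produces a $\da_m$-equivalence $\PQ:\Symb\bR^m\to\Diff\bR^m$ that is the identity on symbols by construction. For uniqueness, if $\PQ'$ is another such equivalence then $\PQ^{-1}\PQ'$ is a $\da_m$-automorphism of $\Symb\bR^m$ that is the identity on symbols, so $\PQ^{-1}\PQ'-\mathrm{id}$ carries each $\Symb^k\bR^m$ into $\bigoplus_{j<k}\Symb^j\bR^m$; but $\Hom_{\da_m}(\Symb^k\bR^m,\Symb^j\bR^m)=0$ for $j\ne k$, since such a map must respect generalized $\Omega$-eigenvalues and $c_k\ne c_j$, so $\PQ^{-1}\PQ'=\mathrm{id}$ and $\PQ'=\PQ$.

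The one real obstacle is the spectral input of the second paragraph: proving that the Casimir of $\da_m$ takes a single value $c_k$ on all of $\Symb^k\bR^m$ and that distinct orders yield distinct values. Once that separation is in hand, the proposition is nothing more than the principle that a filtered module whose successive quotients lie in distinct ``blocks'' for a central operator splits canonically, and uniquely, along the generalized-eigenspace decomposition of that operator.
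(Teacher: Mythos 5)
Your argument is correct and is essentially the paper's proof, expanded into its standard details. The paper's one-line justification is that the symbol modules are duals of relative ($\dl_m$-parabolic) Verma modules of $\da_m$ with distinct Casimir eigenvalues, ``whence the result''; your paragraphs two and three are exactly the unpacking of that ``whence'' — identifying $\Symb^k\bR^m$ via its $\du_m$-invariants $W_k \cong L(-ke_1)$ as a module with a single infinitesimal character, showing the $c_k$ are distinct, and then performing the routine but-worth-writing-out manipulation of generalized $\Omega$-eigenspaces to split the order filtration and establish uniqueness from $\Hom_{\da_m}(\Symb^k\bR^m,\Symb^j\bR^m)=0$ for $j\neq k$. (Your $c_k=2k(k+1)$ at $m=1$ differs from the paper's $\mu^2-\mu$ normalization only by the overall scaling of the Casimir, which is immaterial.)
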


\begin{proof}
By ``the identity on symbols'', we mean that for all $S\in\Symb^k\bR^m$, $\PQ(S)$ is in $\Diff^k\bR^m$ and has symbol $S$.  It is not hard to check that the symbol modules are duals of relative Verma modules of $\da_m$ with distinct infinitesimal characters (indeed, distinct Casimir eigenvalues), whence the result.
\end{proof}

The explicit formula for $\PQ$ was obtained independently by Cohen, Manin, and Zagier (for $m=1$) \cite{CMZ97}, and by Lecomte and Ovsienko (for all $m$) \cite{LO99}.  Our theme in this article is the action $\sigma$ of $\VRm$ on $\Diff\bR^m$ ``in terms of $\PQ$'', by which we mean the action on $\Symb\bR^m$ obtained by transferring $\sigma$ via $\PQ$.  As we will see, the explicit formula for the transferred action $\pi$ contains geometric and cohomological information.  We now define $\pi$ and give a lemma stating its most elementary properties.

\begin{definition} \label{pi}
Let $\pi$ be the action\/ $\PQ^{-1} \circ \sigma \circ \PQ$ of\/ $\VRm$ on\/ $\Symb\bR^m$.  Regard it as an infinite matrix with entries
$$ \pi_{ij}: \VRm \to \Hom_\bC(\Symb^j\bR^m,\Symb^i\bR^m),\ \ i,j\in\bN. $$
\end{definition}

\begin{lemma} \label{piij}
\begin{enumerate}
\item[(a)]  The matrix $\pi$ is upper triangular.
\item[(b)]  Its diagonal entries are $\pi_{ii} = \sigma_i$, the actions on the\/ $\Symb^i\bR^m$.
\item[(c)]  For $i<j$, $\pi_{ij}$ is $\da_m$-covariant and zero on $\da_m$.
\end{enumerate}
\end{lemma}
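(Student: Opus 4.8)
The plan is to extract each of the three parts from the defining relation $\pi = \PQ^{-1} \circ \sigma \circ \PQ$ together with the two structural facts about $\PQ$ established in Proposition \ref{PQ}: that $\PQ$ is an $\da_m$-isomorphism, and that it is ``the identity on symbols'', i.e.\ $\PQ$ maps $\Symb^k\bR^m$ into $\Diff^k\bR^m$ with symbol the identity. For part (a), I would argue that $\pi$ preserves the filtration $\bigoplus_{i\le k}\Symb^i\bR^m$ induced on $\Symb\bR^m$ via $\PQ$ from the order filtration on $\Diff\bR^m$. Indeed $\sigma$ preserves $\Diff^k\bR^m$ (it is a derivation action, as noted before the statement of Proposition \ref{PQ}), and since $\PQ$ carries $\bigoplus_{i\le k}\Symb^i\bR^m$ onto $\Diff^k\bR^m$, the conjugate $\pi$ preserves $\bigoplus_{i\le k}\Symb^i\bR^m$ for every $k$. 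In matrix terms this says precisely $\pi_{ij}=0$ for $i>j$, which is upper triangularity.

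For part (b), I would pass to the associated graded. Since $\pi$ respects the filtration, it induces an action on each subquotient $\bigoplus_{i\le k}\Symb^i\bR^m \big/ \bigoplus_{i\le k-1}\Symb^i\bR^m \cong \Symb^k\bR^m$, and that induced action is exactly the $(k,k)$ diagonal entry $\pi_{kk}$. On the other hand, under $\PQ$ this subquotient corresponds to $\Diff^k\bR^m/\Diff^{k-1}\bR^m = \Symb^k\bR^m$ with the action $\sigma_k$, and the identification is the identity because $\PQ$ is the identity on symbols. Hence $\pi_{kk}=\sigma_k$.

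For part (c), fix $i<j$. That $\pi_{ij}$ is $\da_m$-covariant, i.e.\ a morphism of $\da_m$-modules from $\Symb^j\bR^m$ to $\Symb^i\bR^m$ when one builds in the adjoint action on $\VRm$, is immediate from the fact that $\pi$ itself is an action of the Lie algebra $\VRm\supseteq\da_m$ and that each graded piece is $\da_m$-stable: writing out the cocycle-type identity $\pi(\ad(Y)X) = [\sigma(Y),\pi(X)]$ for $Y\in\da_m$ and taking the $(i,j)$ component, using part (b) to identify the diagonal blocks as $\sigma_i,\sigma_j$, gives exactly the covariance of $\pi_{ij}$. The remaining claim, that $\pi_{ij}$ vanishes on $\da_m$ for $i<j$, is the substantive point and I expect it to be the main obstacle: it amounts to saying that $\PQ$ intertwines not merely the $\da_m$-action but does so ``strictly'', block-diagonally. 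The cleanest route is to observe that $\PQ|_{\Symb^k\bR^m}$ is the \emph{unique} $\da_m$-module map $\Symb^k\bR^m \to \Diff^k\bR^m$ lifting the identity on $\Symb^k\bR^m$ — this uniqueness is exactly what Proposition \ref{PQ} gives, since distinct Casimir eigenvalues on the $\Symb^i\bR^m$ force any $\da_m$-map to be ``scalar on each block'' and hence compatible with the grading. Consequently, for $X\in\da_m$, the operator $\PQ^{-1}\sigma(X)\PQ$ must act on each $\Symb^k\bR^m$ as $\sigma_k(X)$ and have no off-diagonal part; that is, $\pi(X)$ is diagonal for $X\in\da_m$, i.e.\ $\pi_{ij}(X)=0$ for $i\ne j$. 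Combined with upper triangularity this yields $\pi_{ij}|_{\da_m}=0$ for $i<j$, completing the proof.

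Throughout, the only genuinely delicate input is the semisimplicity/multiplicity-free behavior of $\Symb\bR^m$ as a $\da_m$-module guaranteeing the uniqueness of the quantization map on each block; once that is in hand, parts (a)–(c) are formal consequences of conjugation by $\PQ$, and I would present them in the order above.
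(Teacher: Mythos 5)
Your proof is correct and follows essentially the paper's (very terse) argument: (a) from filtration preservation by $\sigma$ and $\PQ$, (b) from $\PQ$ being the identity on symbols, (c) from the $\da_m$-covariance of $\PQ$. Two small remarks on (c). First, the ``zero on $\da_m$'' claim should logically precede the covariance claim: in the identity $\pi([Y,X]) = [\pi(Y),\pi(X)]$ with $Y\in\da_m$, extracting the $(i,j)$ component and recognizing only $\sigma_i(Y)$ and $\sigma_j(Y)$ as the contributing blocks of $\pi(Y)$ already presupposes that $\pi(Y)$ is block-diagonal, which is precisely ``zero on $\da_m$''; so that fact must be in hand before the covariance computation is run. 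Second, the detour through the uniqueness of $\PQ$ and the distinct Casimir eigenvalues is unnecessary for this lemma: the $\da_m$-covariance of $\PQ$ alone gives, for $X\in\da_m$, that $\pi(X)=\PQ^{-1}\sigma(X)\PQ$ equals the $\da_m$-action $\bigoplus_k\sigma_k(X)$ on $\Symb\bR^m$, which is manifestly block-diagonal — no appeal to uniqueness of the quantization is needed.
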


\begin{proof}
Part~(a) is due to the fact that $\sigma$ and $\PQ$ preserve the filtration $\Diff^k\bR^m$.  Part~(b) holds because $\PQ$ is the identity on symbols.  Part~(c) follows from the $\da_m$-covariance of $\PQ$.
\end{proof}

One of our central goals is to compute the matrix entries $\pi_{ij}$.  As a \r\ of $\da_m$, $\VRm/\da_m$ is an irreducible lowest weight module, so by Lemma~\ref{piij}c each $\pi_{ij}$ is determined by its value on the \lwv.  This \lwv\ is $x_1^3 D_1$ for $m=1$ and $x_m^2 D_1$ for $m > 1$, explaining why we will see a dichotomy between these two cases.

Let us give two examples of the kind of data the $\pi_{ij}$ contain.  First, the \sq\ $\Diff^k\bR^m/\Diff^l\bR^m$ splits as $\bigoplus_{l<i\le k}\Symb^i\bR^m$ under $\VRm$ \iff\ $\pi_{ij}=0$ for $l< i<j \le k$.  Such splittings are of geometric interest.

Second, Lemma~\ref{piij}c says that the upper triangular entries are {\em $\da_m$-relative 1-cochains.\/}  The fact that $\pi$ is a \r\ translates to the {\it cup equation:\/}
\begin{equation} \label{cup}
   \partial \pi_{ij} + \sum_{i<r<j} \pi_{ir}\cup\pi_{rj} \ =\ 0,
\end{equation}
where $\partial$ is the coboundary operator.  In particular, the entries $\pi_{i,i+1}$ on the first superdiagonal are 1-cocycles.  The uniqueness of $\PQ$ implies that they are cohomologically trivial \iff\ they are zero, so the non-zero entries are a source of non-trivial cohomology classes.

\subsection*{Tensor field modules}
In computing the $\pi_{ij}$, one is led to a general class of projective quantizations.  Observe that the symbol modules are algebraic sections of completely reducible vector bundles over $\bR^m$ of finite dimensional fiber.  Such $\VRm$-modules are {\em \tfm s.\/}  Other examples are the alternating forms and the tensor densities.  In fact, all \tfm s arise as sections of subbundles of tensor products of the form and density bundles.

Given any two \tfm s $F$ and $E$, we have the $\VRm$-module $\Diff(F,E)$ of \dog s from $F$ to $E$.  It is filtered by order, and the associated symbol modules $\Symb^k(F,E)$ are again \tfm s.

It is not hard to see that the matrix entries $\pi_{ij}$ defined earlier are \dog-valued.  Since they are $\da_m$-covariant, it becomes necessary to understand the decomposition of $\Diff(\Symb^j\bR^m, \Symb^i\bR^m)$ under $\da_m$.  In other words, we must study the projective quantizations not only of ordinary \dog s, but also of \dog s between symbol modules, the study of which leads to still other quantizations.  The appropriate level of generality is attained by studying the projective quantizations $\PQ_{F,E}$ of all modules $\Diff(F, E)$, where $F$ and $E$ are arbitrary \tfm s.

Tensor field modules are in bijection with completely reducible finite dimensional \r s of $\dgl_m$.  To explain, we must define certain Lie subalgebras of $\VRm$.  Let $\Vec_n\bR^m$ be the algebra of vector fields vanishing to order at least~$n+1$ at the origin, and let $\du_m$ and $\dl_m$ be the {\em constant\/} and {\em linear} algebras, respectively:
\begin{displaymath} \begin{array}{rclcl}
   \du_m &:=& \Span_\bC \bigl\{ D_i : 1\le i\le m \bigr\} &\cong& \bC^m, \\[4pt]
   \dl_m &:=& \Span_\bC \bigl\{ x_jD_i : 1\le i,j\le m \bigr\} &\cong& \dgl_m.
\end{array} \end{displaymath}
Clearly $\VRm = \du_m \oplus \Vec_0\bR^m$ and $\Vec_0\bR^m = \dl_m \oplus \Vec_1\bR^m$.  Furthermore, $\Vec_1\bR^m$ is an ideal in $\Vec_0\bR^m$.  For reference, let $\db_m$ be the {\em affine algebra\/} $\du_m \oplus \dl_m$.

Given any $\dl_m$-module $V$, define a $\VRm$-module $F(V)$ by
$$ F(V) := \bigl( \Ind_{\dU(\Vec_0\bR^m)}^{\dU(\VRm)} V^* \bigr)^* .$$
(Here $\dU$ denotes the \uea, $V$ is extended trivially to $\Vec_0\bR^m$, and the outer dual is restricted so that $\dl_m$ acts locally finitely.)  Then $F(V)$ is the module of sections of the bundle with fiber $V$, and $V \mapsto F(V)$ is the bijection from completely reducible finite dimensional $\dl_m$-modules to \tfm s.  Note that the $\du_m$-invariant subspace $F(V)^{\du_m}$ of $F(V)$, the ``lowest $\dl_m$-module'' of $F(V)$, is $V$ itself.  Hence the inverse of $V \mapsto F(V)$ is $F \mapsto F^{\du_m}$.  

In order to describe the symbol modules of $\Diff(F,E)$, we recall the theory of \irr\ finite dimensional \r s of $\dl_m$.  Let $\dh_m$ be the Cartan subalgebra $\Span_\bC\{x_iD_i: 1\le i\le m\}$.  Given $\lambda\in\bC^m$, let $L(\lambda)$ be the \irr\ $\dl_m$-module of lowest weight $\lambda$, by which we mean that $x_iD_i$ acts on the \lwv\ by $\lambda_i$ and $x_jD_i$ annihilates it for all $i<j$.  The finite dimensional $\dl_m$-modules are precisely those $L(\lambda)$ such that $\lambda_i-\lambda_{i-1}$ is in $\bN$ for all~$i$.  The dual $L(\lambda)^*$ is $L(\lambda^*)$, where $\lambda^*$ is defined to be $(-\lambda_m,\ldots,-\lambda_1)$.

The following examples are useful.  The space of homogeneous polynomials of degree~$j$ is $\dl_m$-invariant and has \lwv\ $x_m^j$, which has weight $je_m$ (we write $e_i$ for the $i^\thup$ standard basis vector of $\bC^m$).  Therefore it is $L(je_m)$, the $j^\thup$ symmetric power of $L(e_m)$.  The $\du_m$-invariant subspace of $\Symb^k \bR^m$ is the span of the constant symbols $\{D^I: |I|=k\}$, which has \lwv\ $D_1^k$ and is $L(-ke_1)$, the dual of $L(ke_m)$.

As we stated, for any $\dl_m$-modules $V$ and $W$, $\Symb^k \bigl(F(V), F(W)\bigr)$ is itself a \tfm.  Its $\du_m$-invariant $\dl_m$-submodule is $L(-ke_1) \ot V^* \ot W$, so
\begin{equation} \label{VWsymbol}
   \Symb^k \bigl(F(V), F(W)\bigr) = F \bigl(L(-ke_1) \ot V^* \ot W\bigr).
\end{equation}

As an $\da_m$-module, $F(V)$ is the dual of the $\dl_m$-relative Verma module induced by $V^*$.  For generic choices of $V$ and $W$, no two of the symbol modules (\ref{VWsymbol}) have any $\da_m$-infinitesimal characters in common.  In these cases the projective quantization $\PQ_{F(V), F(W)}$ exists: it is the unique symbol-preserving $\da_m$-equivalence
\begin{equation} \label{genPQ}
   \PQ_{F(V), F(W)}: \bigoplus_{k=0}^\infty F \bigl(L(-ke_1) \ot V^* \ot W\bigr)
   \to \Diff \bigl(F(V), F(W)\bigr).
\end{equation}

\subsubsection*{The resonant case}
Those choices of $V$ and $W$ for which the symbol modules share $\da_m$-infinitesimal characters are called {\em resonant.\/}  These cases are singular, but nevertheless play a crucial role even in the study of the non-resonant cases.  Usually the resonant cases do not admit projective quantizations, and when they do, the quantizations are not unique without further restrictions.

\subsection*{Research problems}

We will be guided by the following five problems, which are not fully solved and offer directions for research.  As we will see, they are tightly related, and Problems~2 and~3 in some sense govern the others.  We only formulate them for \dog s, but they make sense for \psdog s.

Let $F$ and $E$ be arbitrary \tfm s.

\medbreak{\sc Problem~1.}
Describe the action of $\VRm$ on $\Diff(F,E)$ in terms of $\PQ_{F,E}$.

\medbreak{\sc Problem~2.}
Describe composition in terms of $\PQ_{F,E}$.

\medbreak{\sc Problem~3.}
Compute the cohomology of $F$ and $\Diff(F,E)$.

\medbreak{\sc Problem~4.}
Which \sq s of the $\Diff(F,E)$ are equivalent?

\medbreak{\sc Problem~5.}
Classify uniserial extensions of \tfm s.

\subsection*{Tensor density modules}
For $\lambda\in\bC$, let $\bC_\lambda$ denote the 1-dimensional module of $\dl_m$ in which the Euler operator $E$ acts by $m\lambda$, the module $L(\lambda, \ldots, \lambda)$.  The $\VRm$-modules $F(\bC_\lambda)$ are the {\em \tdm s,\/} the simplest \tfm s.  We will write $F(\lambda)$ for $F(\bC_\lambda)$, and $\pi_\lambda$ for the action of $\VRm$ on it.  This module may be expressed concretely as follows:
\begin{equation*}
   F(\lambda) := dx^\lambda \bC[x_1, \ldots, x_n], \quad
   \pi_\lambda(X)(dx^\lambda f) := dx^\lambda \bigl(X(f) + \lambda f \nabla \cdot X\bigr).
\end{equation*}

We now state the generalization of Proposition~\ref{PQ} to \tdm s.  It is a special case of more precise results of \cite{Le00}.  Its forward implication follows easily from the eigenvalues of the Casimir operator of $\da_m$ on the symbol modules, but the converse is more involved.  For convenience, define
\begin{equation} \label{DifflpPQlp}
   \Diff(\lp) := \Diff\bigl(F(\lambda), F(\lambda + p)\bigr), \quad
   \PQ_{\lp} := \PQ_{F(\lambda), F(\lambda+p)}.
\end{equation}

\begin{prop} \label{tdmPQ}
The projective quantization\/ $\PQ_\lp$ of the $\VRm$-module $\Diff(\lp)$ exists and is unique for all $\lambda$ \iff\ $p \not\in 1 + \frac{1}{m+1} \bN$.
\end{prop}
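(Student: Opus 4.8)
The plan is to reduce everything to the $\da_m$-infinitesimal characters of the symbol modules of $\Diff(\lp)$, just as in the proof of Proposition~\ref{PQ}. Using (\ref{VWsymbol}), $\bC_\lambda^*=\bC_{-\lambda}$, and $L(-ke_1)\ot\bC_p=L(p-k,p,\ldots,p)$, one has
$$ \Symb^k(\lp)\;=\;F\bigl(L(-ke_1)\ot\bC_{-\lambda}\ot\bC_{\lambda+p}\bigr)\;=\;F\bigl(L(p-k,p,\ldots,p)\bigr), $$
which is, crucially, \emph{independent of $\lambda$}: the parameter $\lambda$ enters only through the extension data of $\Diff(\lp)$, not through its associated graded. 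As an $\da_m$-module $F(V)$ is the dual of the $\dl_m$-relative Verma module induced from $V^*=L(-p,\ldots,-p,k-p)$, so I would compute the Casimir eigenvalue $c_k(p)$ of $\da_m\cong\dsl_{m+1}$ on $\Symb^k(\lp)$ from the highest weight of that Verma module. Since this highest weight is affine-linear in $k$, the Harish--Chandra formula gives a quadratic $c_k(p)=2k^2+2\bigl(m-(m+1)p\bigr)k+\gamma(p)$ whose leading coefficient is independent of $\lambda$ and $p$; its vertex is at $k=\frac{1}{2}\bigl((m+1)(p-1)+1\bigr)$, so $c_k(p)=c_l(p)$ for distinct $k,l\in\bN$ exactly when $k+l=(m+1)(p-1)+1$, which is solvable if and only if $(m+1)(p-1)\in\bN$, i.e.\ if and only if $p\in 1+\frac{1}{m+1}\bN$.

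The forward implication is then immediate. If $p\notin 1+\frac{1}{m+1}\bN$ the scalars $c_k(p)$ are pairwise distinct; the Casimir $\Omega$ of $\da_m$ preserves each $\Diff^k(\lp)$ and satisfies $\prod_{i\le k}(\Omega-c_i(p))=0$ there, so $\Diff(\lp)$ is the (locally finite) direct sum of its $\Omega$-eigenspaces, which are exactly the $\Symb^i(\lp)$. Assembling these gives a symbol-preserving $\da_m$-equivalence $\PQ_\lp$; any two such differ by a filtration-preserving $\da_m$-automorphism of $\bigoplus_k\Symb^k(\lp)$ that is the identity on symbols, whose off-diagonal entries lie in $\Hom_{\da_m}\bigl(\Symb^j(\lp),\Symb^i(\lp)\bigr)$ for $i<j$, and these vanish by distinctness of infinitesimal characters, so $\PQ_\lp$ is unique.

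For the converse one must show that when $p=1+\frac{n}{m+1}$, $n\in\bN$, the resonance really obstructs existence or uniqueness for some $\lambda$. Because the fibres of $k\mapsto c_k(p)$ have size at most two, for each resonant pair $i<j$ with $i+j=n+1$ the $c_i(p)$-eigenspace $Q_{ij}$ of $\Omega$ on $\Diff(\lp)$ is a length-two extension of $\Symb^j(\lp)$ by $\Symb^i(\lp)$, and $\PQ_\lp$ exists and is unique if and only if, for every resonant pair, $Q_{ij}$ splits \emph{and} $\Hom_{\da_m}\bigl(\Symb^j(\lp),\Symb^i(\lp)\bigr)=0$. The latter $\Hom$ is $\lambda$-independent and asks for an $\da_m$-invariant differential operator between the tensor field modules $F(L(p-j,p,\ldots,p))$ and $F(L(p-i,p,\ldots,p))$. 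When $m=1$ these are the density modules $F(p-j)$ and $F(p-i)$, and the resonance condition $i+j=2p-1$ is precisely the condition for the existence of the order-$(j-i)$ Bol operator between them, so $\Hom_{\da_1}\neq 0$ and $\PQ_\lp$ is non-unique for \emph{every} $\lambda$; this settles $m=1$.

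For $m>1$ the symbol modules are genuine higher tensor fields, no such invariant operator need exist, and one expects instead to exhibit a $\lambda$ for which $Q_{ij}$ is non-split --- this is the main obstacle. By Lemma~\ref{piij}c and the discussion of the cup equation~(\ref{cup}), the relevant obstruction is an off-diagonal matrix coefficient of the action $\sigma$, an element of $\Ext^1_{\da_m}\bigl(\Symb^j(\lp),\Symb^i(\lp)\bigr)$ that is $\da_m$-covariant and hence determined by its value on the lowest-weight vector $x_m^2D_1$ of $\VRm/\da_m$; one must show that this value is nonzero for a suitable $\lambda$ with $p=1+\frac{n}{m+1}$. This is a finite but delicate computation, delicate because the class actually realized inside $\Diff(\lp)$ varies with $\lambda$; alternatively, the full submodule lattice of $\Diff(\lp)$ can be read off from the more precise results of \cite{Le00}, of which this proposition is a special case, yielding the failure of existence or uniqueness directly.
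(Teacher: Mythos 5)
Your argument follows the same route the paper sketches: the forward implication via distinct Casimir eigenvalues on the symbol modules (your vertex computation $k+l=(m+1)(p-1)+1$ correctly reproduces the resonance set, and matches the $m=1$ check $c_k=(p-k)^2-(p-k)$), and the converse is deferred to the more precise results of \cite{Le00}, which is exactly what the paper does after noting that this direction ``is more involved.'' Your $m=1$ treatment of the converse via Bol operators is a correct and pleasant addition, and your honest flagging of the remaining $m>1$ gap (exhibiting a non-split $Q_{ij}$ for some $\lambda$ in the resonant case) is appropriate given that the paper itself does not supply that argument.
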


For $m=1$, $\PQ_\lp$ was computed in \cite{CMZ97}, and for $m>1$, $\PQ_{\lambda,0}$ was computed in \cite{LO99}.  The general formula may be found in \cite{DO01}.

The symbol modules of $\Diff(\lp)$ are independent of $\lambda$, so we write $\Symb^k(p)$ for the $k^\thup$ one, and $\Symb(p)$ for $\bigoplus_k \Symb^k(p)$.  Let $\sigma^{\lp}$ be the action of $\VRm$ on $\Diff(\lp)$, and let $\sigma_k^p$ be its action on $\Symb^k(p)$.  We have the following analogs of Definition~\ref{pi} and Lemma~\ref{piij}:

\begin{definition} \label{tdmpi}
Let $\pi^\lp$ be the action\/ $\PQ_\lp^{-1} \circ \sigma^\lp \circ \PQ_\lp$ of\/ $\VRm$ on\/ $\Symb(p)$.  Regard it as an infinite matrix with entries
$$ \pi^\lp_{ij}: \VRm \to \Hom_\bC\bigl(\Symb^j(p), \Symb^i(p)\bigr),\ \ i,j\in\bN. $$
\end{definition}

\begin{lemma} \label{tdmpiij}
\begin{enumerate}
\item[(a)]  The matrix $\pi^\lp$ is upper triangular.
\item[(b)]  Its diagonal entries are $\pi^\lp_{ii} = \sigma^p_i$.
\item[(c)]  For $i<j$, $\pi^\lp_{ij}$ is $\da_m$-covariant and zero on $\da_m$.
\end{enumerate}
\end{lemma}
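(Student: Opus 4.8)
The plan is to imitate the proof of Lemma~\ref{piij} almost word for word, with Proposition~\ref{tdmPQ} and the general existence statement~(\ref{genPQ}) (specialized to $V=\bC_\lambda$, $W=\bC_{\lambda+p}$) playing the role that Proposition~\ref{PQ} played there. Implicit in Definition~\ref{tdmpi} is the hypothesis $p\notin 1+\frac1{m+1}\bN$, under which $\PQ_\lp$ is the unique symbol-preserving $\da_m$-equivalence
$$ \PQ_\lp:\ \bigoplus_{k=0}^\infty\Symb^k(p)\ \longrightarrow\ \Diff(\lp); $$
I will use throughout that the source of $\PQ_\lp$ carries the direct-sum $\da_m$-action $\bigoplus_k\sigma^p_k$, and that ``symbol-preserving'' means $\PQ_\lp$ maps $\bigoplus_{k\le n}\Symb^k(p)$ onto the order filtrand $\Diff^n(\lp)$ and induces the identity on the associated graded module.

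\emph{Parts (a) and (b).} Since $\sigma^\lp$ preserves the order filtration $\{\Diff^n(\lp)\}$ and $\PQ_\lp$ carries $\bigoplus_{k\le n}\Symb^k(p)$ isomorphically onto $\Diff^n(\lp)$, the conjugated action $\pi^\lp=\PQ_\lp^{-1}\circ\sigma^\lp\circ\PQ_\lp$ preserves each $\bigoplus_{k\le n}\Symb^k(p)$; this is exactly the assertion that $\pi^\lp_{ij}=0$ for $i>j$, proving (a). For (b), pass to associated gradeds: because $\PQ_\lp$ induces the identity there, the action of $\VRm$ induced by $\pi^\lp$ on $\bigoplus_k\Symb^k(p)$ --- which is precisely the diagonal part $\bigoplus_i\pi^\lp_{ii}$ of $\pi^\lp$ --- agrees with the graded of $\sigma^\lp$, namely $\bigoplus_i\sigma^p_i$. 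Hence $\pi^\lp_{ii}=\sigma^p_i$.

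\emph{Part (c).} Because $\PQ_\lp$ is an $\da_m$-equivalence from $\bigl(\bigoplus_k\Symb^k(p),\ \bigoplus_k\sigma^p_k\bigr)$ to $\bigl(\Diff(\lp),\ \sigma^\lp|_{\da_m}\bigr)$, the restriction $\pi^\lp|_{\da_m}$ is block diagonal and equals $\bigoplus_i\sigma^p_i|_{\da_m}$; in particular $\pi^\lp_{ij}$ vanishes on $\da_m$ whenever $i<j$. The $\da_m$-covariance of $\pi^\lp_{ij}$ is then formal: $\pi^\lp$ is a Lie algebra homomorphism $\VRm\to\End_\bC\bigl(\Symb(p)\bigr)$, so for $Y\in\da_m$ and $X\in\VRm$ the $(i,j)$ block of the identity $[\pi^\lp(Y),\pi^\lp(X)]=\pi^\lp([Y,X])$ reads, using block diagonality of $\pi^\lp(Y)$,
$$ \sigma^p_i(Y)\circ\pi^\lp_{ij}(X)-\pi^\lp_{ij}(X)\circ\sigma^p_j(Y)=\pi^\lp_{ij}([Y,X]), $$
which says precisely that $X\mapsto\pi^\lp_{ij}(X)$ is a morphism of $\da_m$-modules from $\VRm$, viewed as an $\da_m$-module under the bracket, to $\Hom_\bC\bigl(\Symb^j(p),\Symb^i(p)\bigr)$ with its natural $\da_m$-action.

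\emph{Main obstacle.} There is none of substance: all the nontrivial content is in Proposition~\ref{tdmPQ}, which is quoted, and the present lemma is bookkeeping. The only points deserving care are (i) insisting that the $\da_m$-module structure on $\Symb(p)$ used here is the direct-sum one that makes $\PQ_\lp$ an $\da_m$-equivalence --- without this, (b) and the block diagonality in (c) both fail --- and (ii) keeping straight, at each step, whether it is the order filtration or the induced grading that is being transported by $\PQ_\lp$.
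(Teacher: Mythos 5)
Your proposal is correct and follows exactly the same route as the paper's proof of Lemma~\ref{piij}, of which Lemma~\ref{tdmpiij} is the stated analog (the paper does not even repeat the argument for the tensor density case). You have simply spelled out in more detail the three one-line observations --- filtration preservation gives (a), symbol preservation gives (b), and $\da_m$-equivariance of $\PQ_\lp$ gives (c) --- and the unwinding of the covariance claim in (c) via the block form of $[\pi^\lp(Y),\pi^\lp(X)]=\pi^\lp([Y,X])$ is exactly what the paper leaves implicit.
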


To our knowledge, the five research problems have thus far been studied extensively only for the \tdm s.  We conclude this section by summarizing their status in this case.  This is in fact the general case for $m=1$, as there all \tfm s are direct sums of \tdm s.

\medbreak{\sc Problem~1.}
This consists in computing the $\pi^\lp_{ij}$ sufficiently explicitly for applications, for example to Problems~4 and~5.  For $m=1$, its solution follows from the solution of Problem~2 given in \cite{CMZ97}; the explicit formulas may be found in \cite{Co05}.  These formulas are valid also for \psdog s.  The resonant case was examined in \cite{Ga00, CS04}.  

For $m > 1$, those $\pi^\lp_{ij}$ with $p=0$ and $j-i=1$ or~$2$ were computed in \cite{LO99}, and each of the entries on the higher superdiagonals was shown to lie in a certain 2-dimensional space.  The $p\not=0$ cases are unexplored.

\medbreak{\sc Problem~2.}
Under composition of \dog s, $\bigoplus_\lp \Diff(\lp)$ is a filtered algebra whose commutative graded algebra is $\bigoplus_\lp \Symb(p)$.  The goal is to describe the associative algebra structure on $\bigoplus_\lp \Symb(p)$ obtained by pulling composition back via $\PQ_\lp$.  More precisely, composition is a map
\begin{equation} \label{comp}
   \Comp: \Diff(\lambda+p, q) \ot \Diff(\lp) \to \Diff(\lambda, p+q),
\end{equation}
and one wants to compute $\Comp^{\lambda,p,q} := \PQ_{\lambda,p+q}^{-1} \circ \Comp \circ (\PQ_{\lambda+p,q} \ot \PQ_\lp)$.  For $m=1$, this was carried out in \cite{CMZ97}.  We know of no results for $m>1$.

\medbreak{\sc Problem~3.}
For $m=1$ it turns out that the \tdm s all occur as symbol modules, so we wish to compute the $\VRm$-cohomology rings of the algebras $\bigoplus_\lp \Diff(\lp)$ and $\bigoplus_p \Symb(p)$.  The cohomology spaces of the symbols were computed in \cite{Go73}, and those of the \dog s were computed in \cite{FF80}.  The ring structure given by the cup product is essentially trivial on the symbols, but on the \dog s it is non-trivial.  The cup products on $H^1$ were computed in \cite{Co01, CS04}.  The higher cup products are not known.

For $m>1$, interesting results are obtained only by admitting more general \tfm s.  The sole result we know of in this direction is the computation of the 1-cohomology classes of $\Diff\bigl( \Symb^j(0), \Symb^i(0) \bigr)$ \cite{LO00}.

\medbreak{\sc Problem~4.}
The subjects of this problem are the \sq\ modules
\begin{equation} \label{PDOSQ}
   \SQ^k_l(\lp) := \Diff^k(\lp) \big/ \Diff^{k-l}(\lp)
\end{equation}
of $\VRm$.  Note that~$l$ is essentially the Jordan-H\"older length.  The basic question is to determine the equivalence classes of these modules.  This topic was introduced in \cite{DO97}, where the equivalence classes and $\VRm$-endomorphism rings of the modules $\Diff^2(\lambda,0)$ were determined.  For $m=1$, their work was extended to $\Diff^k(\lp)$ in \cite{Ga00}.  For $m>1$, it was extended to $\Diff^k(\lambda,0)$ in \cite{LMT96}.

Subquotients were first considered in \cite{LO99}, where the equivalence classes of the modules $\SQ^k_l(\lambda,0)$ are classified.  For $m=1$, the results extend to \psdog s.  In general, for high length~$l$, each module is equivalent only to its adjoint, while for low length, modules with the same composition series are usually equivalent.  In intermediate lengths, the equivalence classes are interesting.

\medbreak{\sc Problem~5.}
It is not certain that this problem can be completely solved, but progress has been made for $m=1$ and it would be interesting to try to replicate it for $m>1$.  Uniserial (\ie\ \cind) modules of length~2 are classified by the $\Ext^1$ groups between the elements of their composition series, and those of length~3 are classified by cup products in $\Ext^2$.  The classification of those of higher length amounts to solving the cup equation~(\ref{cup}).

For $m=1$, the length~2 and length~3 uniserial modules composed of \tdm s were computed in \cite{FF80} and \cite{Co01}, respectively.  Most of them, along with several of higher length, can be realized as \sq s of \psdog\ modules \cite{Co05}.  

For $m>1$, as in Problem~3 one gets interesting results only by admitting more general \tfm s as composition series elements.  To date only the $\Ext^1$ groups between the symbol modules $\Symb^k(0)$ computed in \cite{LO00} are known.

\section{Vec$\th\bR$} \label{R}

In this section we discuss Problems~1 through~5 in detail for $\VR$.  As mentioned, here it suffices to treat $F(\lambda)$ and $\Diff(\lp)$.  Note that $\Diff(\lp)$ is spanned by operators of the form $dx^p f(x) D^k$, where $f$ is a polynomial and $k \in \bN$.

By Proposition~\ref{tdmPQ}, $\PQ_\lp$ exists in the non-resonant cases $p\not\in 1+\bN/2$.  By~(\ref{VWsymbol}), the symbol module $\Symb^k(p)$ is equivalent to $F(p-k)$: the map $dx^p f(x) D^k \mapsto dx^{p-k} f(x)$ is an equivalence.  Thus $\PQ_\lp$ is an $\da_1$-equivalence
\begin{equation} \label{PQR}
   \PQ_\lp: \bigoplus_{k=0}^\infty F(p-k) \to \Diff(\lp).
\end{equation}

\subsection*{Problem~1.}
Here we want to compute the matrix entries $\pi^\lp_{ij}$ from Definition~\ref{tdmpi}.  By~(\ref{PQR}), they may be viewed as maps
$$ \pi^\lp_{ij}: \VR \to \Hom\bigl( F(p-j), F(p-i)\bigr). $$
By Lemma~\ref{tdmpiij}, they are $\da_1$-covariant and zero on $\da_1$.  Since $\VR/\da_1$ is $\da_1$-\irr\ with lowest weight~$2$ and \lwv\ $x^3D$, each entry is determined by its value on $x^3 D$, and this value must be a \lwv\ of weight~$2$.

It is simple to check that $\Hom\bigl( F(\mu), F(\mu+q)\bigr)$ has no \lwv s of weight~$2$ unless $q\in 2+\bN$, when up to a scalar it has one such, namely, $dx^q D^{q-2}$.  It follows that for $j-i \ge 2$, $\pi^\lp_{ij}$ is \dog-valued, maps $x^3 D$ to a multiple of $dx^{j-i} D^{j-i-2}$, and is completely determined by the multiple, while for $j-i = 1$ it is zero.  Therefore there are scalars $B^\lp_{ij}$ such that
\begin{equation} \label{Bij}
   \pi^\lp_{ij} (x^3 D) = 6 B^\lp_{ij} dx^{j-i} D^{j-i-2}.
\end{equation}

Let us describe the $\pi^\lp_{ij}$ in terms of transvectants and Bol operators, both classical objects.  First, for $\mu + \nu \not\in -\bN$ and $k \in \bN$, there exists a unique (up to a scalar) $\da_1$-covariant map
$$ J^{\mu, \nu}_k: F(\mu) \ot F(\nu) \to F(\mu + \nu + k), $$
the {\em transvectant.\/}  These maps were studied by Gordan in the nineteenth century and are closely related to Clebsch-Gordan coefficients.  They are essentially the same as the Rankin-Cohen brackets of modular forms.

Second, there exist non-scalar $\da_1$-covariant maps from $F(\mu)$ to $F(\nu)$ \iff\ $2\mu = 1-q$ and $2\nu = 1+q$ for some $q \in \bZ^+$.  In this case there is a unique (up to a scalar) such map, the {\em Bol operator\/}
\begin{equation} \label{Bol}
   \Bol_q := dx^q D^q: F({\ts \frac{1-q}{2}}) \to F({\ts \frac{1+q}{2}}).
\end{equation}
It is surjective, and its kernel is the $q$-dimensional $\da_1$-submodule of $F(\frac{1-q}{2})$.

Note that the adjoint action of $\VR$ on itself is naturally equivalent to $F(-1)$, via the identification $f(x) D \equiv dx^{-1} f(x)$.  Therefore $\Bol_3$ may be regarded as the unique $\da_1$-map from $\VR$ to $F(2)$, and as such it has kernel $\da_1$.  Thinking of the matrix entry $\pi^\lp_{ij}$ as a map from $\VR \ot F(p-j)$ to $F(p-i)$ and applying Lemma~\ref{tdmpiij}, we find that it must be a multiple of $J^{2, p-j}_{j-i-2} \circ (\Bol_3 \ot 1)$.

\subsubsection*{Computing the scalars}
So far we have seen that it is easy to compute the $\pi^\lp_{ij}$ up to the scalars $B^\lp_{ij}$.  The computation of these scalars is difficult.  It is a special case of Problem~2, but let us briefly describe the direct method used in \cite{Co01}.  

To begin with, for $q \in 2 + \bN$ define $\beta_q(\mu)$ to be the unique $\da_1$-map from $\VR$ to $\Diff(\mu, q)$ that is zero on $\da_1$ and carries $x^3 D$ to $6 dx^q D^{q-2}$.  Then~(\ref{Bij}) becomes
\begin{equation} \label{piijbeta}
   \pi^\lp_{ij} = B^\lp_{ij} \beta_{j-i}(p-j).
\end{equation}

Consider evaluating both $\pi^\lp_{ij}$ and $\beta_{j-i}(p-j)$ on the weight~$j-i$ vector field $x^{j-i+1} D$, and applying the resulting two weight~$j-i$ \dog s in $\Diff(p-j,j-i)$ to the \lwv\ $dx^{p-j}$ of $F(p-j)$.  This will give two multiples of the \lwv\ $dx^{p-i}$ of $F(p-i)$, and the ratio of these multiples is $B^\lp_{ij}$.  Let us temporarily call these multiples $C_\pi$ and $C_\beta$, respectively.

It is easy to compute $C_\beta$.  To compute $C_\pi$, we must go back to the definition of $\pi^\lp$ as $\PQ^{-1}_\lp \circ \sigma^\lp \circ \PQ_\lp$.  Since $\PQ_\lp$ preserves \lwv s and symbols, it maps $dx^{p-j}$ to $dx^p D^j$.  Using this and the fact that $\PQ_\lp$ is an $\da_1$-map, one finds that the image of $dx^p D^j$ under $\sigma^\lp(x^{j-i+1} D)$ is $C_\pi dx^p D^i$, {\em modulo the image of $\sigma^\lp(x^2 D)$,\/} the action of the raising operator in $\da_1$.  Continuing in this vein, one obtains $C_\pi$ with the help of the element $P(\da_1)(x^{j-i+1} D)$ of the {\em step algebra\/} $S(\VR, \da_1)$.  (Here $P(\da_1)$ is the {\em extremal projector\/} of $\da_1$.)

The explicit formula for $B^\lp_{ij}$ is long and we will not include it here.  In the notation of~(3) and~(4) of \cite{Co05}, it is $b_{p-i,p-j}(\lp)$.

\subsubsection*{The resonant case}
For $p\in 1+\bN/2$, $\Diff(\lp)$ is in general not completely reducible under $\da_1$, and there is no projective quantization of the form~(\ref{PQR}).  However, there is a {\em resonant projective quantization\/} $\o\PQ_\lp$, not far removed in spirit from the usual projective quantization \cite{Ga00, CS04}.  

We have mentioned that the resonant cases play a role in the study of the non-resonant cases.  By this we mean that the non-resonant matrix entries $\pi^\lp_{ij}$ take values in the resonant modules $\Diff(p-j, j-i)$.

To construct $\o\PQ_\lp$, one first checks that the Casimir operator of $\da_1$ acts on $F(\mu)$ by the scalar $\mu^2-\mu$, so it has the same eigenvalues on $F(\mu)$ and $F(1-\mu)$.  Therefore, in the resonant case some of the generalized Casimir eigenspaces contain two symbol modules: $\Symb^k(p) \cong F(p-k)$ and $\Symb^{2p-k-1}(p) \cong F(1-p+k)$ are in the same generalized eigenspace for $k < p-1/2$.  In general, $\da_1$ does not act semisimply on this generalized eigenspace: it is the injective $\da_1$-module of $F(p-k)$.

This injective module does split under the affine subalgebra $\db_1 = \du_1 \oplus \dl_1$: there is a 1-parameter family of $\db_1$-injections from $F(p-k) \oplus F(1-p+k)$ to $\Diff(\lp)$ which are the identity on symbols.  Thus the generalized eigenspaces of the Casimir operator lead to the following result: there is a $\lf p-1 \rf$-parameter family of {\em affine quantizations\/} ($\db_1$-equivalences which are the identity on symbols)
$$ \o\PQ_\lp: \bigoplus_{k=0}^\infty F(p-k) \to \Diff(\lp) $$
which induce projective quantizations of both the quotient $\Diff(\lp) / \Diff^{\lf p-1 \rf}(\lp)$ and the submodule $\Diff^{\lf p-1/2 \rf}(\lp)$.  (Here $\lf\cdot\rf$ denotes the integer part.)

The aim of \cite{CS04} was to find the most natural choice of $\o\PQ_\lp$ in this family and compute the associated matrix entries $\o\pi^\lp_{ij}$.  Some properties of the matrix entries are true for all choices of $\o\PQ_\lp$ in the family; let us begin by listing them.

The fact that $\o\PQ_\lp$ induces the projective quantization on the above sub- and quotient \dog\ modules implies that those $\o\pi^\lp_{ij}$ in the {\em non-resonant triangles,\/} where either $i<j<p$ or $p-1<i<j$, are still given by~(\ref{piijbeta}).

The fact that $\o\PQ_\lp$ is a $\db_1$-equivalence implies that all non-diagonal $\o\pi^\lp_{ij}$ are $\db_1$-covariant maps which are zero on $\db_1$.  Furthermore, the fact that $\o\PQ_\lp$ respects the generalized Casimir eigenspaces means that only those $\o\pi^\lp_{ij}$ on the {\em antidiagonal,\/} where $i+j = 2p-1$, can be non-zero on $\da_1$.

The entries above the diagonal are partitioned by three regions: the two non-resonant triangles and the {\em resonant rectangle,\/} where $j\ge p$ and $i\le p-1$.  The antidiagonal is contained in the resonant rectangle.  Entries in this rectangle but off the antidiagonal must be zero on $\da_1$, but in general they are not $\da_1$-covariant.  However, using the cup equation together with the fact that the entries in the non-resonant triangles on the first superdiagonal are zero, it can be shown that the entries in the resonant rectangle adjacent to the antidiagonal, those with $i+j$ equal to $2p-2$ or $2p$, are given by~(\ref{piijbeta}).

Now we turn to the question of the most canonical choice of $\o\PQ_\lp$.  The exceptional resonant cases which do admit quantizations have to do with {\em conjugation,\/} a $\VR$-equivalence between adjoint modules:
\begin{equation} \label{conj}
   \C_{\mu,q}: \Diff(\mu,q) \to \Diff(1-\mu-q, q).
\end{equation}
For $2\mu+q=1$, $\C_{\mu,q}$ is an involution defining a $\VR$-splitting of $\Diff(\mu,q)$ along even and odd order.  This is the {\em self-adjoint\/} case.

In those self-adjoint resonant cases with $q\in \bZ^+$ (where $\Bol_q(\mu)$ exists), the doubled Casimir eigenvalues are split by the eigenspaces of the involution $\C_{\mu,q}$, and so $\Diff(\mu,q)$ has a unique projective quantization compatible with $\C_{\mu,q}$.  The resonant antidiagonal matrix entries take values precisely in these cases.  In \cite{CS04} we sought $\o\PQ_\lp$ such that these entries take values in one of the eigenspaces of conjugation.  This condition can be met: it determines $\o\PQ_\lp$ uniquely for $p\in \frac{3}{2} + \bN$, and up to a single parameter for $p\in 1 + \bN$.

Let us conclude this digression on the resonant case with a brief description of the form of the antidiagonal entries, as they are quite lovely.  For $\mu$ arbitrary and $q\in\bN$, the {\em affine Bol operator\/} $dx^q D^q$ in $\Diff^q(\mu, q)$ is $\db_1$-invariant and has $\VR$-invariant symbol.  Therefore its {\em coboundary\/} $\partial(dx^q D^q)$ is a $\db_1$-relative $\Diff^{q-1}$-valued 1-cocycle.  In the self-adjoint case, $dx^q D^q$ is $\Bol_q$, and its coboundary is a multiple of the $\Diff^{q-2}$-valued 1-cochain $\beta_q(\mu)$.  In general, $\partial(dx^q D^q)$ is a linear combination of $\beta_q(\mu)$ and a certain $\Diff^{q-1}$-valued 1-cochain $\alpha_q(\mu)$, uniquely determined by its symbol and the condition that it have no {\em subsymbol.\/}  (Since the non-resonant entries $\o\pi^\lp_{ij}$ on the first superdiagonal are zero, elements of $\Diff(\mu,q)$ have $\VR$-invariant subsymbols as well as symbols, except in order~$q$: see Problem~4 below.)

This 1-cochain $\alpha_q(\mu)$ has an analytic continuation to the self-adjoint value of $\mu$, and the antidiagonal entry $\o\pi^\lp_{ij}$ is a multiple of $\alpha_{j-i}(p-j)$.  The multiple, as well as the other entries in the resonant rectangle, may be computed by taking the resonant limit of the non-resonant case.

\subsection*{Problem~2.}
The goal here is to compute the map $\Comp^{\lambda,p,q}$ defined below~(\ref{comp}).  By~(\ref{PQR}), it has range and domain
\begin{displaymath}
   \Comp^{\lambda,p,q}: \Bigl(\bigoplus_{i=0}^\infty F(q-i) \Bigr)
   \ot \Bigl(\bigoplus_{j=0}^\infty F(p-j) \Bigr)
   \to \bigoplus_{k=0}^\infty F(p+q-k).
\end{displaymath}

By definition, $\Comp^{\lambda,p,q}$ is $\da_1$-covariant, and it exists only when no resonant \dog\ modules are involved.  Therefore its constituent maps
\begin{displaymath}
   \Comp^{\lambda,p,q}_{i,j,k}: F(q-i) \ot F(p-j) \to F(p+q-k)
\end{displaymath}
are $\da_1$-maps, hence multiples of transvectants: for some scalars $t^{\lambda,p,q}_{i,j,k}$,
\begin{displaymath}
   \Comp^{\lambda,p,q}_{i,j,k} = t^{\lambda,p,q}_{i,j,k}\th J^{q-i,p-j}_{i+j-k}.
\end{displaymath}

The main result of \cite{CMZ97} is the computation of the $t^{\lambda,p,q}_{i,j,k}$ and their symmetries: a tour de force.  In addition to several order~2 symmetries related to conjugation and the {\em Adler trace\/} (or {\em noncommutative residue\/}), they possess an order~3 symmetry arising from a triality of the transvectants and the Adler trace.

This project can be carried out in the resonant case with $\o\PQ$ in place of $\PQ$: see \cite{CS04}, where expressions for the resulting scalars are given as limits of the non-resonant scalars.  In this setting the transvectants are replaced by $\dsl_2$-covariant maps from the tensor product of two injective modules to a third.

\subsection*{Problem 3.}
Here we describe the results of \cite{Go73} and \cite{FF80}, which are extremely beautiful.  We begin with the $\VR$-cohomology of the \tdm s.  Clearly $H^0 F(\lambda)$ is zero unless $\lambda=0$, when it is $\bC\cdot 1$.  It is instructive to describe $H^1 F(\lambda)$ explicitly: it is zero unless $\lambda$ is~$0$, $1$, or $2$, where, regarding $\VR$ as $F(-1)$ as below~(\ref{Bol}),
\begin{equation} \label{VR1coho}
   H^1 F(0) = \bC dx D, \quad
   H^1 F(1) = \bC dx^2 D^2, \quad
   H^1 F(2) = \bC dx^3 D^3.
\end{equation}
All of these 1-cocycles are $\du_1$-relative and $\db_1$-covariant.  Moreover, $dx^2 D^2$ is $\db_1$-relative, and $dx^3 D^3$, being $\Bol_3$, is $\da_1$-relative.

Let us give some idea of how~(\ref{VR1coho}) is proven.  The following lemma is useful.

\begin{lemma} \label{FCL}
Any $r$-cohomology class of\/ $\VR$ taking values in\/ $F(\lambda)$, $\Diff(\lp)$, or\/ $\Hom\bigl(F(\lambda), F(\lambda+p)\bigr)$ is $\du_1$-relative and $\db_1$-covariant, and its contraction by the Euler operator $E=xD$ is a $\db_1$-relative $(r-1)$-class.
\end{lemma}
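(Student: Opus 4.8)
The plan is to exploit that both the Euler field $E=xD$ and the constant field $D$ lie in $\VR$, so the Lie derivatives $L_E$ and $L_D$ on the Chevalley--Eilenberg complex $C^\bullet(\VR;W)$ ---where $W$ denotes $F(\lambda)$, $\Diff(\lp)$, or $\Hom\bigl(F(\lambda),F(\lambda+p)\bigr)$--- are inner, hence nullhomotopic via Cartan's formula $L_X=d\,\iota_X+\iota_X\,d$ and act by zero on $H^\bullet(\VR;W)$. The first step is to combine this with the weight grading. Write $\VR=\bigoplus_{n\ge-1}\dg_n$ with $\dg_n:=\bC\,x^{n+1}D$ the $\ad E$-eigenspace of eigenvalue $n$, so that $\du_1=\dg_{-1}$, $\dl_1=\dg_0$, $\db_1=\dg_{-1}\oplus\dg_0$; and grade $W$ by $E$-weight (for $F(\lambda)$, put $dx^\lambda x^n$ in weight $n+\lambda$, and similarly for the other two modules). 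Then $C^\bullet(\VR;W)$ is $\bZ$-graded, $d$ has weight $0$, and $L_E$ acts on the weight-$w$ part by multiplication by $w$; since $L_E$ is also zero on cohomology, $H^\bullet(\VR;W)$ is concentrated in weight $0$. Every class thus has a weight-$0$ representative $c$, and weight $0$ is exactly $L_E c=0$, that is, $\dl_1$-covariance.

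It remains ---and this is the heart of the argument--- to replace $c$ by a representative that is in addition $\du_1$-relative, $\iota_D c=0$ and $L_D c=0$; together with $L_E c=0$ this yields $\db_1$-covariance. The natural framework is relative Lie algebra cohomology, but there is a real difficulty: $\dl_1=\bC E$ is reductive in $\VR$ (the adjoint action of $E$ is semisimple, with invariant complement $\du_1\oplus\Vec_1\bR$), whereas $\du_1=\bC D$ is not ---$D=\partial_x$ acts only locally nilpotently on $W$, with no kernel--cokernel symmetry--- so one cannot simply invoke the reductive-subalgebra theorem for $\db_1$ or for $\du_1$. The remedy is the reduction to the affine subalgebra used throughout Gelfand--Fuks theory (\cite{FF80}): working inside the weight-$0$ subcomplex, one builds the Koszul-type contracting homotopy for the one-dimensional subalgebra $\du_1$ degree by degree in $W$, which is possible precisely because $D$ acts nilpotently on each of the finite-dimensional pieces $dx^\lambda\bC[x]_{\le n}$ (and their analogues for $\Diff(\lp)$ and $\Hom$), and then passes to the limit. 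The output is, for each class, a $\du_1$-relative representative that is still $\dl_1$-covariant, hence $\db_1$-covariant. I expect this middle step to be the main obstacle: the semisimple action of $E$ and the nilpotent action of $D$ do not mesh, and only the grading lets the argument go through.

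The final assertion is then formal. Let $c$ be a $\du_1$-relative, $\db_1$-covariant $r$-cocycle. Then $\iota_E c$ is an $(r-1)$-cocycle, since $d\,\iota_E c=L_E c-\iota_E\,dc=0$. It is $\du_1$-relative: $\iota_D\,\iota_E c=-\iota_E\,\iota_D c=0$, and, using $[D,E]=D$, $L_D\,\iota_E c=\iota_E\,L_D c+\iota_{[D,E]}c=\iota_D c=0$. It is also $\dl_1$-relative: $\iota_E\,\iota_E c=0$ and $L_E\,\iota_E c=\iota_E\,L_E c=0$. Hence $\iota_E c$ is a $\db_1$-relative $(r-1)$-cocycle. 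Finally, if $c$ is altered by $d\phi$ with $\phi$ again $\du_1$-relative and $\db_1$-covariant, then $\iota_E c$ changes by $\iota_E\,d\phi=L_E\phi-d\,\iota_E\phi=-d\,\iota_E\phi$, the coboundary of the $\db_1$-relative cochain $\iota_E\phi$, so the $\db_1$-relative class of $\iota_E c$ is well defined. This step and the weight-$0$ reduction are routine; the work lies in the middle.
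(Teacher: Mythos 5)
The paper states Lemma~\ref{FCL} without proof, treating it as a standard distillation of facts from Gelfand--Fuks theory (\cite{Go73, FF80}), so there is no ``paper proof'' to match your argument against; I can only assess your proposal on its own terms.

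Your framework is the right one, and two of your three steps are sound. The weight-$0$ reduction via $L_E=d\,\iota_E+\iota_E\,d$ and the grading is the standard argument and gives $\dl_1$-covariance. The formal computations at the end are correct: $\iota_E c$ is a cocycle, $\iota_D\iota_E c=0$, $L_D\iota_E c=\iota_E L_D c+\iota_{[D,E]}c=\iota_D c=0$, $\iota_E\iota_E c=0$, and $L_E\iota_E c=0$, so $\iota_E c$ is $\db_1$-relative. You have, however, overlooked a simplification that shrinks the ``hard'' middle step: once a cocycle $c'$ satisfies $\iota_D c'=0$, the identity $L_D c'=d\,\iota_D c'+\iota_D\,dc'$ gives $L_D c'=0$ for free. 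So $\du_1$-relativity of a cocycle reduces to arranging $\iota_D c'=0$ alone, and the real content of the lemma is that every weight-$0$ cocycle is cohomologous, by a weight-$0$ coboundary, to one annihilated by $\iota_D$.

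That remaining step is genuinely incomplete in your proposal. The phrase ``working inside the weight-$0$ subcomplex'' is already off, because neither $\iota_D$ nor $L_D$ preserves the weight-$0$ subcomplex: both lower weight by one (since $[E,D]=-D$), so the putative contracting homotopy does not live where you place it. The condition you would actually need is that the map $\phi\mapsto\iota_D d\phi=L_D\phi-d\,\iota_D\phi$, from weight-$0$ $(r-1)$-cochains to weight-$(-1)$ $(r-1)$-cochains, hits $\iota_D c$ for every weight-$0$ $r$-cocycle $c$; this is a surjectivity statement in weight $-1$, not a nilpotency statement, and the appeal to $D$ being nilpotent on the finite-dimensional truncations $dx^\lambda\bC[x]_{\le n}$ followed by ``passing to the limit'' does not establish it. (For the $\Hom$ module the weight spaces are infinite-dimensional and $L_D$ is not even locally nilpotent, so the limit argument would need real care.) Finally, the well-definedness of $[\iota_E c]$ in $\db_1$-relative cohomology, which you verify only against $\db_1$-covariant coboundaries $d\phi$, implicitly assumes that such a $\phi$ can always be chosen --- which is exactly a relative form of the missing middle step, so as written the argument is circular at that point. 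The outline is the standard and correct one, but the $\du_1$-reduction needs an honest proof (for instance via the two-step Hochschild--Serre filtration associated to $\du_1\subset\db_1\subset\VR$, or by an explicit surjectivity check of $L_D$ in weight $-1$ for each of the three coefficient modules), and the unnoticed implication ``$\iota_D c'=0\Rightarrow L_D c'=0$ for cocycles'' is the key to making that reduction manageable.
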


To prove~(\ref{VR1coho}), let $\omega$ be a non-zero $\du_1$-relative $\db_1$-covariant $F(\lambda)$-valued 1-cocycle of $\VR$.  Then $\omega$ is determined by $\omega(x^{\lambda+1}D)$, a multiple of $dx^\lambda$.  In particular, $\lambda\in\bN$.  If $\lambda > 1$, then $\omega(\da_1)=0$ together with $\partial\omega=0$ imply that $\omega$ is $\da_1$-relative, giving $\lambda=2$.  The rest is easy.

\subsubsection*{Results of\/ \cite{Go73}}
Here $H^r F(\lambda)$ is computed.  Define $\ep^\pm(r) := \frac{1}{2}(3r^2\pm r)$, the Euler polynomials.  First we give the $\db_1$-relative cohomology $H_{\db_1}$: for $r\in\bN$, there exist $\db_1$-relative $F\bigl(\ep^\pm(r)\bigr)$-valued $r$-cocycles $\phi^\pm_r$ of $\VR$ such that
\begin{equation} \label{bGo}
   H_{\db_1}^r F(\lambda) = \bC \phi^\pm_r
   \mbox{\rm\ \ if $\lambda = \ep^\pm(r)$, and zero otherwise.}
\end{equation}

For example, $\phi^-_0 = \phi^+_0 = 1$, $\phi^-_1 = dx^2 D^2$, and $\phi^+_1 = \Bol_3$.  In fact, all the $\phi^\pm_r$ except for $\phi^-_1$ are $\da_1$-relative.  The full cohomology consists of the $\db_1$-relative cohomology together with its cup product with the non-$\db_1$-relative 1-cocycle $dx D$:

\begin{theorem} \cite{Go73}
$\phi^\pm_r$ and $(dx D)\cup \phi^\pm_{r-1}$ are a basis for $\bigoplus_{\lambda\in\bC} H^r F(\lambda)$.
\end{theorem}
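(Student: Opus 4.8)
The plan is to bootstrap from the $\db_1$-relative cohomology~\eqref{bGo} of \cite{Go73} to the full cohomology by means of the long exact sequence associated to the pair $(\VR, \db_1)$, or equivalently the Hochschild--Serre type spectral sequence for the subalgebra $\db_1 \subset \VR$. By Lemma~\ref{FCL}, every $r$-class of $\VR$ with values in $F(\lambda)$ is $\du_1$-relative and $\db_1$-covariant; since $\dl_1 \cong \dgl_1$ is one-dimensional, the $\db_1$-covariance is merely a weight-grading condition, and the genuinely new information relative to $H_{\db_1}$ is how the Euler operator $E = xD$ spanning $\dl_1$ acts. The contraction $\iota_E$ against $E$ maps $\du_1$-relative $\db_1$-covariant $r$-cochains to $\db_1$-relative $(r-1)$-cochains, and Lemma~\ref{FCL} says it carries $r$-cocycles to $(r-1)$-cocycles, inducing a map $\iota_E \colon H^r F(\lambda) \to H_{\db_1}^{r-1} F(\lambda)$. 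Dually, cup product with the non-$\db_1$-relative class $dx D \in H^1 F(0)$ sends $H_{\db_1}^{r-1} F(\lambda)$ into $H^r F(\lambda)$.

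The key steps, in order, are as follows. First I would establish the short exact sequence of complexes $0 \to C^\bullet_{\db_1} \to C^\bullet_{\du_1,\db_1\text{-cov}} \xrightarrow{\iota_E} C^{\bullet-1}_{\db_1} \to 0$, where the middle term computes all of $H^\bullet F(\lambda)$ by Lemma~\ref{FCL}; the surjectivity of $\iota_E$ on cochains is the elementary statement that any $\db_1$-relative cochain $\psi$ is $\iota_E(dx D \cup \psi)$ up to a nonzero scalar, using $\iota_E(dx D) \ne 0$. Second, I would write down the resulting long exact sequence
\[
\cdots \to H_{\db_1}^r F(\lambda) \to H^r F(\lambda) \xrightarrow{\iota_E} H_{\db_1}^{r-1} F(\lambda) \xrightarrow{\delta} H_{\db_1}^{r+1} F(\lambda) \to \cdots
\]
and identify the connecting map $\delta$. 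The crucial claim is that $\delta = 0$: this is where cup product with $dx D$ enters, since $\delta$ measures the obstruction to extending a $\db_1$-relative $(r-1)$-cocycle to a $\du_1$-relative $\db_1$-covariant $r$-cocycle, and $dx D \cup \phi^\pm_{r-1}$ is exactly such an extension of (a scalar multiple of) $\phi^\pm_{r-1}$. Granting $\delta = 0$, the long exact sequence breaks into $0 \to H_{\db_1}^r F(\lambda) \to H^r F(\lambda) \xrightarrow{\iota_E} H_{\db_1}^{r-1} F(\lambda) \to 0$, and plugging in~\eqref{bGo} gives that $H^r F(\lambda)$ is one-dimensional spanned by $\phi^\pm_r$ when $\lambda = \ep^\pm(r)$, one-dimensional spanned by $dx D \cup \phi^\pm_{r-1}$ when $\lambda = \ep^\pm(r-1)$, and zero otherwise. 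Summing over $\lambda \in \bC$ and noting that $\ep^+$ and $\ep^-$ take distinct values on distinct arguments (and $\ep^\pm(r) \ne \ep^\pm(r-1)$ for $r \ge 1$), the four classes $\phi^\pm_r$ and $(dx D) \cup \phi^\pm_{r-1}$ are linearly independent and span $\bigoplus_\lambda H^r F(\lambda)$, which is the claim. The small exceptional bookkeeping at $r = 0$ (where $\ep^+(0) = \ep^-(0) = 0$ and there is no $r-1$ term) is handled directly from $H^0 F(\lambda) = \bC\cdot 1$ for $\lambda = 0$ and zero otherwise, consistent with $\phi^\pm_0 = 1$.

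The main obstacle is the vanishing of the connecting homomorphism $\delta$, equivalently the statement that $dx D \cup (-)$ is a genuine chain-level section of $\iota_E$ on cohomology rather than merely on cochains. Concretely one must verify that $dx D \cup \phi^\pm_{r-1}$ is closed, \ie\ $\partial(dx D \cup \phi^\pm_{r-1}) = 0$; by the Leibniz rule $\partial(dx D \cup \phi^\pm_{r-1}) = (\partial\, dx D) \cup \phi^\pm_{r-1} \pm dx D \cup \partial\phi^\pm_{r-1}$, the second term vanishes since $\phi^\pm_{r-1}$ is a cocycle, and the first vanishes since $dx D$ is a cocycle — so this is immediate, and the real content is that the resulting class is nonzero and not $\db_1$-relative, which follows because $\iota_E(dx D \cup \phi^\pm_{r-1})$ is a nonzero multiple of $\phi^\pm_{r-1}$ and hence nonzero. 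Thus $\delta$ vanishes for degree reasons once one checks that the target values $\ep^\pm(r\pm1)$ never coincide with the source value $\ep^\pm(r)$ in a way that would allow a nonzero connecting map; since $\ep^+$ and $\ep^-$ are strictly monotincreasing on $\bN$ with disjoint images away from $0$, no such coincidence occurs and $\delta = 0$ automatically. The remaining verifications — the Leibniz rule for the relative cup product, the precise normalization of $\iota_E(dx D)$, and the $r=0,1$ base cases — are routine and I would not grind through them here.
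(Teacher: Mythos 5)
The paper does not prove this theorem---it is quoted from \cite{Go73}---and the only methodological hints it offers are Lemma~\ref{FCL}, the formula~(\ref{bGo}), and the one-sentence remark that the full cohomology is the $\db_1$-relative cohomology together with its cup product with $dx D$. Your long-exact-sequence argument is a correct formalization of that remark and matches the paper's sketch in spirit. The short exact sequence of complexes is set up correctly: the kernel of contraction by $E$ on $\du_1$-relative $\db_1$-covariant cochains is exactly the $\db_1$-relative subcomplex, surjectivity holds via $\psi \mapsto dx D \cup \psi$ (this is $\du_1$-relative and $\db_1$-covariant, and contraction by $E$ returns $(dx D)(E)\cdot\psi = \psi$), contraction by $E$ anticommutes with $\partial$ by the Cartan identity since $L_E$ vanishes on $\db_1$-covariant cochains, and the connecting map vanishes because the pentagonal numbers $\ep^\pm(r-1)$ and $\ep^\pm(r+1)$ never coincide.

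Two points should be made explicit rather than left implicit. First, the argument needs the strong form of Lemma~\ref{FCL}: the inclusion of the $\du_1$-relative $\db_1$-covariant subcomplex into the full Chevalley--Eilenberg complex must be a quasi-isomorphism, not merely that every class admits such a representative cocycle. Without this, the middle term of your short exact sequence of complexes does not compute $H^\bullet F(\lambda)$ and the long exact sequence says nothing about it. The paper's phrasing presupposes the strong form (otherwise ``its contraction by the Euler operator $E$'' is not well-defined on classes), so your reading is the intended one, but it is the only serious technical input beyond~(\ref{bGo}) and should be named. Second, your clause about ``the source value $\ep^\pm(r)$'' in the degree argument is a slip---the source of $\delta$ lives at $\lambda=\ep^\pm(r-1)$ and the target at $\lambda=\ep^\pm(r+1)$, and the point is that these never coincide---but the conclusion $\delta=0$ is correct, and in fact your other observation, that cup with $dx D$ gives a cohomology-level section of contraction by $E$, already forces $\delta=0$ without any numerology.
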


Note that $\bigoplus_\lambda F(\lambda)$, the sum of all the symbol modules, is a commutative algebra.  It follows from~(\ref{bGo}) that the associated cup product is trivial on $\db_1$-relative cohomology.  For example, $\phi^-_1 \cup \phi^+_1 = -2\partial(dx^4 D^4)$.

\subsubsection*{Results of\/ \cite{FF80}}
Here $H^r \Hom\bigl(F(\lambda), F(\lambda+p)\bigr)$ is computed.  Adapting Lemma~7.4 of \cite{Co08} shows that the inclusion of $\Diff(\lp)$ induces an isomorphism in cohomology, so we will only discuss $H^r \Diff(\lp)$.

It is again instructive to begin with $H^0$ and $H^1$.  One checks that $H^0 \Diff(\lp)$ is zero unless either $p=0$, when it is $\bC\cdot 1$, or $(\lp) = (0,1)$, when it is $\bC \Bol_1$.  For $p\in\bN$, the lift of the 0-cocycle $\phi^\pm_0=1$ in $F(0)$ via $\o\PQ_\lp$ is $dx^p D^p$.  Thus the 0-cocycles of the $\Diff(\lp)$ are those lifts of the 0-cocycles of the $F(\lambda)$ which are still cocycles after lifting.  This is the rough picture in all degrees.

Now consider $H^1_{\da_1}$, the $\da_1$-relative case.  There are no $\Diff(\lp)$-valued $\da_1$-relative 1-cochains unless $p\in 2+\bN$, when up to a scalar there is exactly one: the map $\beta_p(\lambda)$ defined above~(\ref{piijbeta}).  Note that it is the $\o\PQ_\lp$-lift of $\phi^+_1$ from $F(2)$.

The symbol of the coboundary $\partial\beta_p$ is an $\da_1$-relative 2-cocycle.  Using \cite{Go73}, one finds that it is proportional to one of $\phi^\pm_2$, which are $F(5)$- and $F(7)$-valued.  Therefore $\partial\beta_p$ is zero or of order~$p-5$ or $p-7$.  In particular, if $p<5$ it is zero.

For $p\ge 5$, $\partial\beta_p$ is the $\o\PQ_\lp$-lift of a linear combination of $\phi^\pm_2$.  The coefficient of $\phi^-_2$ is essentially $(2\lambda+p-1)^2 - (3p+1)$; whenever this is zero, $\partial\beta_p$ is of order~$p-7$.  For $p\ge 7$, the coefficients of $\phi^\pm_2$ are never simultaneously zero.  This proves that $\beta_p$ is a cocycle only in the following cases: for all $\lambda$ if $p=2$, $3$, or~$4$; for $\lambda = -4$ or~$0$ if $p=5$; and for $2\lambda = -5 \pm \sqrt{19}$ if $p=6$.

Now $\beta_p$ is not always non-trivial: in the self-adjoint case it is a multiple of $\partial\Bol_p$.  However, in the $p=1$, $2$, $3$, and~$4$ self-adjoint cases, an appropriate lift of the other $\db_1$-relative tensor density 1-cocycle, the $F(1)$-valued $\phi^-_1$, gives a non-trivial $\Diff(\lp)$-valued cocycle of order~$p-1$.  This lift is not exactly the $\o\PQ_\lp$-lift (unless $p=1$ or~$2$); rather, it is the map $\alpha_p$ occurring in the resonant case.

The result is that $H^1_{\db_1} \Diff(\lp)$ is $\bC$ if $p=1$ and $\lambda=0$; $p=2$, $3$, or~$4$; $p=5$ and $\lambda = -4$ or~$0$; or $p=6$ and $2\lambda = -5 \pm \sqrt{19}$.  Otherwise it is zero.

We now turn to $H_{\db_1}^r \Diff(\lp)$.  For $p \ge \ep^\pm(r)$, it is clear from \cite{Go73} that we can construct $\db_1$-relative $\Diff(\lp)$-valued $r$-cochains $\t\phi^\pm_r(\lp)$ of order $p-\ep^\pm_r$ whose symbols are $\phi^\pm_r$ and whose coboundaries are of minimal order.  (These cochains are $\da_1$-relative for $r\ge 2$, but in general $\da_1$-relativity alone does not determine them.)  If $\t\phi^\pm_r$ is not a cocycle, then necessarily the symbol of its coboundary is non-trivial, so said symbol must be cohomologous to one of $\phi^\pm_{r+1}$.  Therefore we can construct the $\t\phi^\pm_r$ so that for some coefficients $M^\pm_r(\lp)$ and $P^\pm_r(\lp)$, we have
\begin{equation} \label{FF}
   \partial \t\phi^\pm_r = M^\pm_r \t\phi^-_{r+1} + P^\pm_r \t\phi^+_{r+1}.
\end{equation}
For example, $\t\phi^\pm_0 = dx^p D^p$, $\t\phi^-_1 = \alpha_p$, and $\t\phi^+_1 = \beta_p$.  It is understood that $\t\phi^\pm_{r+1}$ is to be replaced with zero for $p < \ep^\pm(r+1)$.

For generic $\lambda$, the coefficients $M^\pm_r$ and $P^\pm_r$ are non-zero.  In this case we have the following picture: for $p< \ep^-(r)$, no $\Diff(\lp)$-valued $r$-cocycle has non-trivial symbol.  For $\ep^-(r) \le p < \ep^+(r)$, $\t\phi^-_r$ is a coboundary.  For $p \ge \ep^+(r)$, $\t\phi^-_r$ and $\t\phi^+_r$ are proportional in cohomology.  For $\ep^+(r) \le p < \ep^-(r+1)$, $\t\phi^+_r$ is a non-trivial cocycle, but for $p \ge \ep^-(r+1)$ it is not a cocycle.

In summary, for generic $\lambda$, $H_{\db_1}^r \Diff(\lp)$ is $\bC \t\phi^+_r$ for $\ep^+(r) \le p <\ep^-(r+1)$, and zero otherwise.  However, the roots of $M^\pm_r$ and $P^\pm_r$ give special cases.  For example, for $\ep^-(r+1) \le p < \ep^+(r+1)$ and $M^+_r(\lp)=0$, $\t\phi^+_r$ is a cocycle, in general non-trivial.  For $H^1_{\db_1}$, this gives the special cases at $p=5$ and~$6$.

Similarly, for $\ep^-(r) \le p < \ep^-(r+1)$ and $M^+_{r-1}(\lp)= 0$, $\t\phi^+_r$ is a coboundary but $\t\phi^-_r$ is in general a non-trivial cocycle.  For $H^1_{\db_1}$, these are the self-adjoint cases.  See Theorem~4.2B and the ``parabola picture'' (Figure~3) of \cite{FF80} for $H^r_{\db_1}$.

Just as for \tdm s, $H^r$ is obtained from $H^r_{\db_1}$ by cupping with the unique non-$\db_1$-relative 1-cocycle.  To be precise, let $\theta(\lambda)$ be the lift of $dx D$ to the $\Diff^0(\lambda,0)$-valued 1-cocycle $f D \mapsto f'$.  Then a basis of $H \Diff(\lp)$ is given by a basis of $H_{\db_1} \Diff(\lp)$, together with its cup product with $\theta(\lambda)$.

\subsubsection*{Remarks}
It would seem worthwhile to find a proof of the results of \cite{Go73} and \cite{FF80} featuring $\db_1$- and $\da_1$-relativity and the spectral sequence associated to the order filtration of the \dog\ modules.  

In \cite{FF80}, the modules with cohomology have two continuous parameters, while the modules $\Diff(\lp)$ have only one, namely, $\lambda$, as $p$ must be in $\bN$ to give cohomology.  We obtain a second parameter by passing to quotients of {\em \psdog\/} (\psidog) modules, as follows.   

For $k\in\bC$, the module of \psidog s of order~$\le k$ from $F(\lambda)$ to $F(\lambda+p)$ is
\begin{displaymath}
   \Psi^k(\lp) := \Bigl\{ \sum_{n=0}^\infty dx^p f_n(x) D^{k-n}: f_n \in \bC[x] \Bigr\}.
\end{displaymath}
Write $\Psi^{k+\bN}$ for $\bigoplus_{j=0}^\infty \Psi^{k+j}$.  Note that $\Diff(\lp)$ is the quotient $\Psi^\bN(\lp)/\Psi^{-1}(\lp)$, and consider the quotients $\Psi^{p+\bN}(\lp)/ \Psi^{p-n-1}(\lp)$ with $\lp\in\bC$ and $n\in\bN$.  They have composition series $F(n), F(n-1), \ldots$, and it seems likely that their cohomology mirrors that of the 2-parameter family studied in \cite{FF80}.  For example, their $H^1_{\db_1}$ is $\bC$ if $n=1$ and $(\lp)$ are self-adjoint; if $n=2$, $3$, or~$4$ for all $(\lp)$; and if $n=5$ or~$6$ and $(2\lambda+p-1)^2 - (3p+1)$ is zero.  Otherwise it is zero.

\subsubsection*{Cup products}
The cup product associated to the algebra $\bigoplus_\lp \Diff(\lp)$ is not trivial on $H_{\db_1}$.  It is known only on $H^1_{\db_1}$ \cite{Co01}, where in the $\da_1$-relative case, the key observation is that $\beta_q(\lambda+p) \cup \beta_p(\lambda)$ is trivial \iff\ it is a multiple of $\partial\beta_{p+q}(\lambda)$.  Both of these 2-cocycles are linear combinations of $\t\phi^\pm_2(\lambda, p+q)$, and so one must compute their coefficients.  For $p+q<7$, $\t\phi^+_2$ is zero so the cup product is usually trivial.  For $p+q \ge 7$, triviality imposes a condition on~$\lambda$; for example, at $p=q=4$ the cup product is trivial only at $2\lambda = -7 \pm \sqrt{39}$.

In \cite{Co01} we treated only \dog\ modules.  The correct level of generality seems to be to treat the quotients of \psidog\ modules defined above.

\subsection*{Problem 4}
Recall $\SQ^k_l(\lp)$ from~(\ref{PDOSQ}), and extend the definition to \psidog\ modules in the obvious way, so that $k$ becomes a continuous parameter.  Our goal is to describe the equivalence classes and $\VR$-endomorphism rings of these modules.  We discuss only the non-resonant case; see \cite{Ga00} for the resonant case.

$\SQ^k_l(\lp)$ has composition series $\{F(p-k+j): 0\le j < l\}$, so $l$ and $p-k$ are invariants.  Applying $\PQ_\lp$, we may regard the $\VR$-action as an upper triangular $l \times l$ matrix with entries given by~(\ref{piijbeta}).  The $\da_1$-endomorphism ring consists of the diagonal matrices with scalar entries, and the $\VR$-endomorphism ring is isomorphic to $\bC^e$, where~$e$ is the number of \ind\ summands of the module.  We will denote the latter by $\End^k_l(\lp)$.

These observations lead to the following results, which are new for $p\not=0$ except in the case of genuine \dog\ modules (not quotients).  We will only outline them here; the details will make up part of a future paper treating $\bR$, $\Roo$, and $\bR^m$ together.  Henceforth fix~$l$ and $p-k$, leaving only $\lambda$ and $p$ free.

\subsubsection*{Length 2}
$\SQ^k_2$ splits under $\VR$ as $\bigoplus_{j=0}^1 F(p-k+j)$ in the non-resonant case, as the matrix entries $\pi^\lp_{j-1,j}$ are always zero.  Hence these modules are all equivalent and have $\End^k_2 = \bC^2$.  In particular, \psidog s of order~$k$ have $\VR$-invariant $F(p-k+1)$-valued {\em subsymbols\/} for $p\not= k$.

\subsubsection*{Length 3}
$\SQ^k_3$ splits under $\VR$ as $\bigoplus_{j=0}^2 F(p-k+j)$ \iff\ $\pi^\lp_{k-2,k}=0$, which occurs \iff\ the scalar $B^\lp_{k-2,k}$ is zero (see~(\ref{piijbeta})).  Otherwise it is the direct sum of $F(p-k+1)$ and the \ind\ module composed of $F(p-k)$ and $F(p-k+2)$.  Thus there are two equivalence classes, determined by whether $B^\lp_{k-2,k}$ is zero or not.  They have $\End^k_3 = \bC^3$ and $\bC^2$, respectively.

In order to describe the solutions of $B^\lp_{k-2,k} =0$, we introduce the variable 
\begin{equation} \label{C}
  C := (\lambda + p/2 -1/2)^2.
\end{equation}
Conjugation~(\ref{conj}) preserves $C$, so the equivalence classes can only depend on $(C,p)$.  By~(6) of \cite{Co05}, $B^\lp_{k-2,k}=0$ is a line in $(C,p)$-space.

\subsubsection*{Length 4}
The equivalence classes of the modules $\SQ^k_4$ are determined by which of the curves $B^\lp_{k-2,k}=0$, $B^\lp_{k-3,k-1}=0$, and $B^\lp_{k-3,k}=0$ the point $(C,p)$ is on.  By~(6) and~(7) of \cite{Co05}, the first two curves are lines and the third is the union of two lines.  As long as none of these lines intersect in resonant points, there are at least seven equivalence classes, and eight if the three curves have a common point.  For most $k$ they do not, but in two cases they do: $\Diff^3(-\frac{2}{3},\frac{7}{3})$ and its dual (via the Adler trace) $\SQ^{-1}_4(\frac{5}{3}, -\frac{7}{3})$.  These cases are linked to the {\em Grozman operator.\/}

\subsubsection*{Length 5}
As before, two modules $\SQ^k_5$ can be equivalent only if they are either both on or both off each of the curves $B^\lp_{k-i,k-j}=0$ for $(i,j)$ equal to $(2,0)$, $(3,1)$, $(4,2)$, $(3,0)$, $(4,1)$, and $(4,0)$.  The first five of these were described above.  By~(8) of \cite{Co05}, $B^\lp_{k-4,k}$ is a parabola in $(C,p)$-space.

In this case there is a hitherto unnoticed phenomenon, visible only when all three of $\lambda$, $p$, and $k$ are allowed to vary continuously: the above ``same vanishing'' conditions are not sufficient for equivalence.  There are two continuous invariants:
\begin{equation} \label{invts}
   B^\lp_{k-4,k-2} B^\lp_{k-2,k} \big/ B^\lp_{k-4,k}, \qquad
   B^\lp_{k-4,k-1} B^\lp_{k-3,k} \big/ B^\lp_{k-4,k} B^\lp_{k-3,k-1}.
\end{equation}

The level curves in $(C,p)$-space of the first of these invariants form a pencil of conics through four fixed points depending only on $k$, and the level curves of the second form a 1-parameter family of cubics.  Since cubics intersect conics in six points, generic equivalence classes in length~5 consist of six pairs of adjoint modules.  

Some interesting points remain unclarified, for example the nature of the family of cubics, and the significance of the four points defining the pencil of conics.  Also, the coordinate system in which the pencil of conics is ``nicest'', the one in which the four points are inscribed in a circle, is intriguing.  In it the cubics are reduced, and its axes seem to have some meaning.

\subsubsection*{Length $\ge$ 6}
Here we expect that at least generically, each module is equivalent only to its adjoint.  However, in length~6 there may be special values of $k$ at which there are other equivalences.  This is because any two length~6 modules whose two pairs of length~5 \sq s are equivalent are themselves equivalent (a consequence of the fact that $\pi^\lp_{k-5,k}$ is not a cocycle).

\subsubsection*{Lacunary \sq s}
Let us introduce a variation of Problem~4 which has not yet been considered.  In the \sq s above, the symbol modules in the composition series are always ``consecutive'', but there are other $\VR$-\sq s.  We discuss only the simplest case: define
\begin{displaymath}
   \SQ^k_{\lac}(\lp) := \PQ_\lp\bigl(F(p-k) \oplus F(p-k+2) \oplus F(p-k+4)\bigr),
\end{displaymath}
the projective quantization of the $k^\thup$, $(k-2)^\ndup$, and $(k-4)^\thup$ symbol modules.  This is of course an $\da_1$-\sq\ of $\Psi^k(\lp)$, but in light of the invariant subsymbol resulting from the length~2 case above, it is in fact a $\VR$-\sq.  

Fixing~$p-k$, the equivalence classes of the $\SQ^k_{\lac}(\lp)$ are given by the ``same vanishing'' condition on $B^\lp_{k-2,k}$, $B^\lp_{k-4,k-2}$, and $B^\lp_{k-4,k}$, along with the first invariant of~(\ref{invts}).  Thus in the region where none of these three scalars vanish, the equivalence classes form the pencil of conics in $(C,p)$-space discussed above.

\subsection*{Problem 5}
Define a $(\lambda; p_1,\ldots,p_n)$ {\em extension\/} to be a $\VR$-module $W$ with an invariant flag $W=W_0\supset W_1\supset \cdots \supset W_{n+1} =0$ such that $W_i/W_{i+1}$ is equivalent to $F(\lambda+ p_1 + \cdots + p_i)$ for all~$i$.  Such an extension is called {\em uniserial\/} if all of its \sq s are indecomposable, \ie\ if $W_i/W_{i+2}$ is \ind\ for all~$i$.

The uniserial $(\lambda; p)$ extensions are classified by $\bP H^1\Diff(\lp)$ (see Problem~3): we get one for all $\lambda$ at $p=0$, $2$, $3$, or~$4$, two at $(0; 1)$, one at the dual pairs $(-2 \pm 2; 5)$ and $(\frac{1}{2}(-5 \pm \sqrt{19}); 6)$, and no others (see Table~1 of \cite{FF80}).

Comparing Problems~1 and~3, we see that the 1-cocycles defining the $\da_1$-split $(\lambda; p)$ extensions, those with $p=2$, $3$, $4$, or~$6$ and $C \not= 0$ (see~(\ref{C})), occur as the $p^\thup$ superdiagonal matrix entries $\pi^{\mu, q}_{k-p,k}$ (the $p=5$ cases are blocked by resonance).

Uniserial $(\lambda; p, q)$ extensions exist when there are uniserial $(\lambda; p)$ and $(\lambda+p; q)$ extensions and the cup product of the associated cocycles is trivial, in which case they are classified by $H^1\Diff(\lambda, p+q)$.  In the $\da_1$-split case, the results are roughly as follows (see \cite{Co01} and the subsection on cup products above).

There is a 1-parameter family of $\da_1$-split $(\lambda; 2,2)$ extensions for most $\lambda$, as $\beta_4(\lambda)$ is a cocycle.  For $p+q = 5$ or~$6$, there exists a unique $(\lambda; p,q)$ extension for most $\lambda$.  For $p+q=7$ or~$8$, one exists only for special $\lambda$: at $(\frac{1}{2}; 4,3)$, its dual $(-\frac{13}{2}; 3,4)$, and the dual pair $(\frac{1}{2}(-7 \pm \sqrt{39}); 4,4)$.

Uniserial modules of length~$\ge 4$ are difficult to construct by hand from~(\ref{cup}), but one can look for them ``in nature''.  For example, it is observed in \cite{FF80} that the $(\lambda; 2)$ extensions are subquotients of the modules $\Lambda^2 F(\mu)$.  Olivier Mathieu posed the following question: which uniserial modules arise as \sq s of the modules $\Psi^k(\lp)$?  Using Problem~1, one finds essentially all known $\db_1$-split cases in this context, along with several more which are new \cite{Co05}.  Taking $k=p-\lambda$ and $C=0$ gives a 1-parameter family of $(\lambda; 2,2,\ldots)$ extensions of infinite length for each $\lambda$, generically uniserial.  (Heuristic parameter-counting arguments fail to predict this family; a conceptual explanation of its existence would shed new light on Problem~3.)  Taking also $B^\lp_{k-2,k} = B^\lp_{k-2l-2,k-2l} = 0$ gives uniserial $(\lambda; 4,2,2,\ldots,2,4)$ extensions of length~$l$ at the special values $(2\lambda +2l+1)^2 = 4l^2+3$.  (The $l=2$ and~$3$ cases are the $(\lambda; 6)$ and $(\lambda; 4,4)$ extensions.)  There are also uniserial $(\frac{1}{8}(-27 \pm \sqrt{649}); 3,3,2)$ and $(\frac{1}{16}(-67 \pm \sqrt{3529}): 3,2,2,2)$ \sq s.

Let us mention that in the case of the Virasoro Lie algebra, there is a uniserial module composed of two trivial modules and the pinned \tdm\ in which the central element does not act by zero \cite{MP92}.  A natural realization would be interesting.

\section{Vec$\th\bR^m$}

Now fix $m > 1$.  Here much less is known (in particular, we will say nothing about Problem~2).  We shall discuss essentially only the \tdm s, and we shall restrict to the non-resonant case $p\not\in 1 + \frac{1}{m+1}\bN$ (see Proposition~\ref{tdmPQ}).  However, first let us say that it would be interesting to know if all injective modules of \tfm s arise as $\da_m$-submodules of the modules $\Diff\bigl(F(V), F(W)\bigr)$ in the resonant case.  We shall also restrict to \dog s, although eventually \psidog s should be considered.

By~(\ref{VWsymbol}), we have $\Symb^k(p) \cong F\bigl( L(-ke_1) \ot \bC_p \bigr)$ as $\VRm$-modules.  Therefore (see~(\ref{genPQ}, \ref{DifflpPQlp})) the projective quantization is an $\da_m$-equivalence
\begin{equation*}
   \PQ_\lp: \bigoplus_{k=0}^\infty F\bigl( L(-ke_1) \ot \bC_p \bigr) \to \Diff(\lp).
\end{equation*}

\subsection*{Problem 1}
As over $\bR$, it is easy to prove that the matrix entries $\pi^\lp_{ij}$ of Definition~\ref{tdmpi} are \dog-valued.  By Lemma~\ref{tdmpiij}, they are $\da_m$-maps
\begin{equation*}
   \pi^\lp_{ij}: \VRm / \da_m \to \Diff\bigl( F\bigl( L(-je_1) \ot \bC_p \bigr), \th
   F\bigl( L(-ie_1) \ot \bC_p \bigr) \bigr).
\end{equation*}

In the non-resonant case, the \dog\ module on the right splits under $\da_m$ as the sum of its symbol modules.  By~(\ref{VWsymbol}), these symbol modules are
\begin{equation*}
   \Symb^k \bigl( F\bigl( L(-je_1) \ot \bC_p \bigr), \th
   F\bigl( L(-ie_1) \ot \bC_p \bigr) \bigr) \cong
   F\bigl( L(-ke_1) \ot L(je_m) \ot L(-ie_1) \bigr).
\end{equation*}
Note that the Euler weight of the $\du_m$-kernel of this module is $j-i-k$.

Recall that Verma modules have unique \irr\ quotients, so the modules $F\bigl(L(\lambda)\bigr)$, being dual to Verma modules, have unique \irr\ submodules.  As an $\da_m$-module, $\VRm/\da_m$ has \lwv\ $x_m^2 D_1$ and is equivalent to the unique \irr\ $\da_m$-submodule of $F\bigl(L(2e_m-e_1)\bigr)$ (which is in fact a proper submodule).  It follows that $\pi^\lp_{ij}(x_m^2 D_1)$ determines $\pi^\lp_{ij}$ and is an $\da_m$-\lwv\ of weight $2e_m-e_1$.  Since it has Euler weight~$1$, $\pi^\lp_{ij}$ must take values in the image of the $(j-i-1)^\stup$ symbol module under the projective quantization.

The dimension of the space of \lwv s of weight $2e_m-e_1$ in this symbol module is the $\dl_m$-multiplicity of $L(2e_m-e_1)$ in $L(-(j-i-1)e_1) \ot L(je_m) \ot L(-ie_1)$.  By the PRV lemma and properties of minuscule weights, this multiplicity is two in general, one if $i=0$ or $j-i=1$, and zero if both hold.

Using the well-known normal ordering notation in which $\xi_i$ represents the symbol of $D_i$ (see, \eg\ \cite{LO99}), one can write the \lwv s of weight $2e_m-e_1$ in $\Diff^{j-i-1}(\Symb^j, \Symb^i)$ explicitly: they are 
\begin{equation*}
   \mbox{\rm $\xi_1 D_{\xi_m}^2 \circ \Div^{j-i-1}$ (for $i \ge 1$), and 
   $D_1 D_{\xi_m}^2 \circ \Div^{j-i-2}$ (for $j-i\ge 2$),}
\end{equation*}
where $\Div$ denotes the divergence $\sum_i D_i D_{\xi_i}$, the unique $\db_m$-invariant operator from $\Symb^k$ to $\Symb^{k-1}$ for all $k$.

The situation is as follows: there are $\da_m$-relative $\Diff(\Symb^j, \Symb^i)$-valued 1-cochains $\beta_{ij}$ and $\gamma_{ij}$ of $\VRm$, defined by
\begin{equation} \label{betagamma}
   \beta_{ij}(x_m^2 D_1) := \xi_1 D_{\xi_m}^2 \circ \Div^{j-i-1}, \quad
   \gamma_{ij}(x_m^2 D_1) := D_1 D_{\xi_m}^2 \circ \Div^{j-i-2}
\end{equation}
(where $\beta_{ij} = 0$ for $i=0$ and $\gamma_{ij} = 0$ for $j-i=1$).  These 1-cochains span the space of all $\da_m$-relative 1-cochains, and so there are scalars $B^\lp_{ij}$ and $C^\lp_{ij}$ such that
\begin{equation*} 
   \pi^\lp_{ij} = B^\lp_{ij} \beta_{ij} + C^\lp_{ij} \gamma_{ij}.
\end{equation*}
This is the multidimensional analog of~(\ref{piijbeta}).  The main difference is that here the space of $\da_m$-relative 1-cochains is in general 2- rather than 1-dimensional.

Explicit formulas for $B^\lp_{ij}$ and $C^\lp_{ij}$ would constitute a complete solution to Problem~1.  To date they are known only for $p=0$ and $j-i=1$ or~$2$ \cite{LO99}.

\subsection*{Problem 3}
The first step is to seek some analog of Lemma~\ref{FCL} (we know of no such result, but see \cite{Le00}).  Next one should examine the cohomology of the \tfm s.  For example, one finds the following analog of~(\ref{VR1coho}):

\begin{prop}
The $\du_m$-relative $\db_m$-covariant 1-cohomology of\/ $\VRm$ with values in\/ $F\bigl(L(\lambda)\bigr)$  is 1-dimensional for $\lambda = 0$, $e_m$, or $2e_m-e_1$, and 0-dimensional for all other $\lambda$.  

Regarding\/ $\VRm$ as\/ $\Symb^1(0,0)$, the cocycle at $\lambda = 0$ is the divergence\/ $\Div$ (in particular, it is~$m$ on the Euler operator $E$).  The cocycle at $\lambda = e_m$ is $\db_m$-relative, and that at $\lambda = 2e_m-e_1$ is $\da_m$-relative.
\end{prop}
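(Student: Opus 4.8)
The plan is to follow the one-dimensional argument sketched just after~(\ref{VR1coho}), organized around the grading by polynomial degree together with the adjoint $\dl_m$-action. Write $|\lambda|:=\lambda_1+\cdots+\lambda_m$, grade $\VRm=\bigoplus_{n\ge0}V_n$ by giving $x^JD_i$ degree $|J|$ (so $V_0=\du_m$, $V_1=\dl_m$, and $x_jE\in V_2$), and grade $F(L(\lambda))=\bigoplus_{k\ge0}W_k$ likewise, so $W_0=L(\lambda)$. First I would note that there are no relevant coboundaries: a $\du_m$-relative $\db_m$-covariant $0$-cochain lies in $L(\lambda)^{\dl_m}$, which is zero unless $\lambda=0$, when it is $\bC\cdot1$ with zero coboundary. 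So the task reduces to computing the space of $\du_m$-relative $\db_m$-covariant $1$-cocycles $\omega$. Such an $\omega$ kills $\du_m$, is $\dl_m\cong\dgl_m$-equivariant, and — since $[E,Y]=(n-1)Y$ for $Y\in V_n$ while $E$ acts on $W_k$ by $k+|\lambda|$ — satisfies $\omega(V_n)\subseteq W_{n-1-|\lambda|}$; in particular $|\lambda|\in\bZ$ (otherwise $\omega=0$) and $\omega$ kills $V_n$ for $n\le|\lambda|$.

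Next I would pin $\omega$ down from a tiny amount of data. From $\du_m$-equivariance, $D_i\cdot\omega(Y)=\omega([D_i,Y])$, and the joint injectivity of $D_1,\dots,D_m$ on $W_k$ for $k\ge1$, one gets that $\omega|_{V_n}$ is determined by $\omega|_{V_{n-1}}$ once $n\ge|\lambda|+2$; chaining down, $\omega$ is determined by $\omega|_{V_{|\lambda|+1}}$, and $\omega\equiv0$ if $|\lambda|<0$. But $\omega|_{V_{|\lambda|+1}}$ is a $\dgl_m$-map from $V_{|\lambda|+1}\cong S^{|\lambda|+1}(\bC^m)\otimes(\bC^m)^*\cong L\bigl((|\lambda|+1)e_m-e_1\bigr)\oplus L(|\lambda|e_m)$ into the \irr\ module $W_0=L(\lambda)$, so it is zero (forcing $\omega=0$) or, up to a scalar, the projection onto one summand; the latter forces $\lambda=(|\lambda|+1)e_m-e_1$ or $\lambda=|\lambda|e_m$. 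Hence for each $\lambda$ the cochain space is at most one-dimensional, and zero unless $\lambda$ lies in one of these two families; the same ``going up'' recursion (consistency coming from the Jacobi identity) shows conversely that every such $\lambda$ carries a unique-up-to-scalar $\du_m$-relative $\db_m$-covariant $1$-cochain.

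Then I would cut the list using $\partial\omega=0$. If $|\lambda|\ge2$, then $\omega$ also kills $\dl_m$ and $\Span\{x_jE\}$ (whose $E$-eigenvalues $0$ and $1$ are $\le|\lambda|$), hence all of $\da_m$; the cocycle identity now upgrades $\db_m$-covariance to $\da_m$-covariance, so $\omega$ factors through the $\da_m$-\irr\ module $\VRm/\da_m$. A nonzero such map embeds $\VRm/\da_m$ in $F(L(\lambda))$, hence into its unique \irr\ $\da_m$-submodule; since $\VRm/\da_m$ is, by hypothesis, the unique \irr\ $\da_m$-submodule of $F(L(2e_m-e_1))$, comparing lowest weights gives $\lambda=2e_m-e_1$, contradicting $|\lambda|\ge2$. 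So only $|\lambda|\le1$ can support a cocycle, leaving the candidates $\lambda\in\{0,\ e_m,\ e_m-e_1,\ 2e_m-e_1\}$. The value $e_m-e_1$ is excluded because there $|\lambda|=0$, so $\omega$ is determined by the $\dgl_m$-module map $\omega|_{\dl_m}\colon\dgl_m\to L(e_m-e_1)$, and a module map out of the adjoint representation that is also a Lie $1$-cocycle must take values in $L(e_m-e_1)^{\dgl_m}=0$, forcing $\omega=0$.

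Finally I would exhibit the three cocycles; as the cochain space at each candidate weight is at most one-dimensional and there are no coboundaries, producing a single nonzero cocycle there makes the cohomology exactly one-dimensional. At $\lambda=0$ the unique cochain is the divergence $\Div\colon\VRm=\Symb^1(0,0)\to F(0)$ (it restricts to the trace $x_jD_i\mapsto\delta_{ij}$ on $\dl_m$, so it is $m$ on $E$), a cocycle by the classical identity $\Div[X,Y]=X(\Div Y)-Y(\Div X)$. At $\lambda=e_m$, where $|\lambda|=1$ so $\omega$ kills $\db_m$ and is thus $\db_m$-relative, the unique cochain is the composite $d\circ\Div\colon\VRm\to F(L(e_m))\cong\Omega^1$ of $\Div$ with the equivariant exterior derivative, hence a cocycle. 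At $\lambda=2e_m-e_1$, again $|\lambda|=1$ so $\omega$ kills $\db_m$, and since $\Hom_{\dgl_m}\bigl(L(e_m),L(2e_m-e_1)\bigr)=0$ it also kills $\Span\{x_jE\}$, hence all of $\da_m$; the unique such cochain can be realized as the $\da_m$-module map $\VRm\to\VRm/\da_m\hookrightarrow F(L(2e_m-e_1))$, which is in particular $\da_m$-relative. Checking that this last map satisfies the $\VRm$-cocycle identity — equivalently, checking it on pairs of vector fields lying outside $\da_m$ — is the multidimensional analogue of the classical fact that $\Bol_3$ is a cocycle of $\VR$, and I expect it to be the main obstacle; it should reduce, via $\da_m$-equivariance and the irreducibility of $\VRm/\da_m$, to the vanishing of a single scalar computed by an analogue of the step-algebra method of Section~\ref{R}. (The leftover weights $2e_m,3e_m,\dots$ and $3e_m-e_1,\dots$ produced in the second paragraph are killed by the $|\lambda|\ge2$ argument.)
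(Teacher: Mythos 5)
The paper does not actually prove this proposition; it states it as the multidimensional analog of equation~\eqref{VR1coho}, whose one-dimensional proof is only sketched just above Lemma~\ref{FCL}. Your argument faithfully generalizes that sketch: the degree grading, the determination of a $\db_m$-covariant cochain by its lowest nonzero graded piece, the use of $\partial\omega=0$ to upgrade $\db_m$- to $\da_m$-covariance when $|\lambda|\ge 2$, and the identification of $\VRm/\da_m$ with the irreducible $\da_m$-submodule of $F(L(2e_m-e_1))$. The exclusion of the spurious candidate $e_m-e_1$ (by noting that a $\dgl_m$-module map out of the adjoint which is simultaneously a Lie 1-cocycle lands in invariants) is a nice observation not present at $m=1$, where the adjoint is abelian of dimension one, and your bookkeeping with $|\lambda|$, the Pieri decomposition $V_n\cong L(ne_m-e_1)\oplus L((n-1)e_m)$, and the joint injectivity of $\du_m$ on $W_k$, $k\ge 1$, are all correct.

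The one genuine gap is exactly the one you flag: you exhibit the candidate at $\lambda=2e_m-e_1$ as the $\da_m$-map $\VRm\to\VRm/\da_m\hookrightarrow F(L(2e_m-e_1))$ but do not verify $\partial\omega=0$. This can be closed without any step-algebra calculation. Since $\omega$ is $\da_m$-relative and $\da_m$-covariant, $\partial\omega$ is an $\da_m$-map $\Lambda^2(\VRm/\da_m)\to F(L(2e_m-e_1))$. It is $E$-weight-preserving ($E\in\dh_m\subset\da_m$), and every vector of $\VRm/\da_m$ has $E$-weight $\ge 1$, so every vector of $\Lambda^2(\VRm/\da_m)$ has $E$-weight $\ge 2$. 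On the other hand, any nonzero $\da_m$-submodule of $F(L(2e_m-e_1))$ (a dual relative Verma module) contains the socle $\VRm/\da_m$, hence a vector of $E$-weight $1$. Therefore $\partial\omega=0$. With this one-line weight argument inserted, your proof is complete. (Similarly, you implicitly cite the classical fact that the one-dimensional Schwarzian $\Bol_3$ is a cocycle; the analogous weight argument works there and is worth recording since the paper only says ``the rest is easy.'')

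A small stylistic point: the assertion that ``the going-up recursion shows conversely that every such $\lambda$ carries a unique-up-to-scalar cochain'' is not needed and is the only place where consistency of the recursion would have to be proved; your explicit constructions at the three surviving weights already give existence, and the recursion is only used for the at-most-one-dimensional bound.
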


Concerning the cohomology of the modules of \dog s, the only work we know is \cite{LO00}, in which the 1-cohomology of $\Diff\bigl(\Symb^j(0), \Symb^i(0) \bigr)$ is computed: it is 1-dimensional for $j-i = 0$, $1$, or~$2$, and zero dimensional otherwise.  The symbol of the cocycle at $j-i=0$ is the $F\bigl(L(0)\bigr)$-valued cocycle $\Div$ above, and the symbols of those at $j-i=1$ and~$2$ are the $\da_m$-relative $F\bigl(L(2e_m-e_1)\bigr)$-valued cocycle.  More precisely, the cocycle at $j-i=1$ is $\beta_{j-1,j}$, and that at $j-i=2$ is a linear combination of $\beta_{j-2,j}$ and $\gamma_{j-2,j}$ (see~(\ref{betagamma})).

The 1-cohomology of $\Diff\bigl(\Symb^j(p), \Symb^i(p) \bigr)$ should be studied for arbitrary~$p$.  In the resonant cases there will a cocycle at $j-i=1$ whose symbol is the $\db_m$-relative $F\bigl(L(e_m)\bigr)$-valued cocycle.  This cocycle will be linked to $\beta_{j-1,j}$ via the coboundary of the {\em multidimensional affine Bol operator,\/} $\Div^{j-i}$ (see the discussion of the 1-dimensional resonant case).

Multidimensional extensions of the results of \cite{Go73} and \cite{FF80} to higher cohomology would be very interesting (and probably very difficult).

\subsection*{Problem 4}
The equivalence classes of the \sq s $\SQ^k_l(\lp)$ are known only for $p=0$ \cite{DO97, LMT96, LO99}.  The generalization to arbitrary~$p$ will be a corollary of the solution of Problem~1: the scalars $B^\lp_{ij}$ and $C^\lp_{ij}$ with $j-i\le 3$ will be the ingredients.  In the \dog\ setting, $p$, $k$, and~$l$ are all invariants, because the composition series is $\{\Symb^{k-j}(p): 0\le j< l\}$.  In the \psdog\ setting both $\lambda$ and $p$ can vary, and it appears likely that the equivalence classes have a rich structure.

\subsubsection*{Length~2}
Since $B^\lp_{i,0} = 0$, $\Diff^k$ (which is $\SQ^k_{k+1}$) and those $\SQ^k_l$ with $l\le k$ behave differently.  For example, $\SQ^k_2$ is split \iff\ $B^\lp_{k-1,k} = 0$.  This occurs only for exceptional (\eg\ self-conjugate) $\lambda$ if $k\ge 2$, but $\Diff^1$ is always split.

\subsubsection*{Length~3}
For $\Diff^2(\lp)$ there are \`a priori four equivalence classes, determined by whether each of $B^\lp_{1,2}$ and $C^\lp_{0,2}$ is zero or not.  By \cite{DO97}, when $p=0$ only three of these classes actually arise: the totally split class is missing.

For $k\ge 3$, the scalars relevant to $\SQ^k_3$ are $C^\lp_{k-2,k}$ and $B^\lp_{k-i,k-j}$ for $(i,j)$ equal to any of $(1,0)$, $(2,1)$, and $(2,0)$.  By Problem~3, the dimension of the space of 1-cocycles corresponding to each of the entries on the first two superdiagonals is one.  It follows that in addition to three discrete (two-valued: zero or non-zero) invariants there is one continuous one, as in the lacunary \sq\ case for $\VR$.  This means that the number of equivalent \sq s with a given $p$ is the $\lambda$-degree of the invariant at~$p$ (which is unknown).  For example, at $p=0$ each \sq\ is equivalent only to its conjugate \cite{LO99}.  However, in the \psdog\ case the equivalence classes will be curves in $(\lp)$-space.

\subsubsection*{Length~$\ge 4$}
$\Diff^3(\lp)$ has four discrete (zero/non-zero) invariants, corresponding to $B^\lp_{1,2}$, $B^\lp_{2,3}$, $C^\lp_{0,2}$, and a linear combination of $B^\lp_{1,3}$ and $C^\lp_{1,3}$ ($C^\lp_{0,3}$ and the other scalar in the $(1,3)$ spot do not give invariants, because they are not coefficients of cocycles).  It also has one continuous invariant.  Thus we expect finite equivalence classes for fixed $p$, but curves in the \psdog\ case.

For $k\ge 4$, $\SQ^k_4(\lp)$ has two continuous invariants, so even in the \psdog\ case we expect only finite equivalence classes.  In higher length we expect that each \sq\ is equivalent only to its conjugate.

\subsection*{Problem 5}
Consider \ind\ $\da_m$-split extensions of $\Symb^j(p)$ by $\Symb^i(p)$.  By \cite{LO00} (see Problem~3), for $p=0$ there is one such at $j-i=1$ and~$2$ and none otherwise.  It is possible that for exceptional~$p$, there is one at $j-i=3$ (the solution of Problem~1 would tell).  The $\da_m$-split extensions of arbitrary pairs of \tfm s should be classified and realized naturally, possibly as \sq s of \psdog\ modules.

\section{Contact structures and conformal quantizations: Con$\th\bR^{2\ell+1}$}
In this section let $m = 2\ell+1$ be odd, fix coordinates $\{x_i, y_i, z: 1\le i\le \ell\}$ on $\bR^m$, and write $\Poly\bR^m$ for $\bC[x_1, \ldots, y_1, \ldots, z]$.  The standard {\em contact form\/} on $\bR^m$ is $\omega_m := dz + \frac{1}{2} \sum_1^\ell (x_i dy_i - y_i dx_i)$.  Note that $\omega_m\wedge (d\omega_m)^\ell \not=0$.

The associated completely nonintegrable distribution is $\Tan\bR^m$, the space of vector fields tangent to (\ie\ annihilated by) $\omega_m$.  The {\em contact Lie algebra\/} is
$$ \Con\bR^m\ :=\ \Stabilizer_{\VRm} \bigl(\Tan\bR^m\bigr)\ =\ 
   \Stabilizer_{\VRm} \bigl(\omega_m \Poly\bR^m\bigr). $$

It is a fact that $\VRm = \Con\bR^m \oplus \Tan\bR^m$.  However, although $\Tan\bR^m$ is a $\Poly\bR^m$-module under multiplication, $\Con\bR^m$ is not.  Recall the \tdm s $F(\lambda)$ of $\VRm$.  Their restrictions to $\Con\bR^m$ are still \irr\ (excepting $F(0)$, which remains of length~2), and there is a $\Con\bR^m$-equivalence
$$ \chi: F\bigl(-\ts\frac{1}{\ell+1}\bigr) \to \Con\bR^m, \quad
   \mbox{\rm defined by} \quad
   \bigl\la \omega_m, \chi\bigl((dx dy dz)^{-1/(\ell+1)}f\bigr) \bigr\ra := f. $$

For $m,\th k\ge 1$, the symbol modules $\Symb^k(p)$ of $\Diff(\lp)$ are not \irr\ under $\Con\bR^m$.  As implied by \cite{Ov06}, there is a $\Con\bR^m$-invariant {\em fine filtration\/} $\bigl\{\Diff^{k,l}(\lp): 0\le k,\ 0\le l\le k\bigr\}$ of $\Diff(\lp)$:
\begin{equation*}
   \Diff^{k,l}(\lp) := \Diff^{k-1}(\lp) +
   \sum_{i=0}^l  \bigl(\Diff^i(\lp)\bigr) \bigl(\Tan\bR^m\bigr)^{k-i}.
\end{equation*}
Geometrically, $\Diff^{k,l}(\lp)$ consists of the \dog s of order~$\le k$ and non-tangential order~$\le l$.  Let $\Symbf^{k,l}(p)$ be the corresponding {\em fine symbol modules\/} $\Diff^{k,l}/\Diff^{k,l-1}$, and write $\sigma^p_{k,l}$ for the action of $\Con\bR^m$ on them.  They are generally \irr\ under $\Con\bR^m$.  Define $\Symbf(p) := \bigoplus_{k,l} \Symbf^{k,l}(p)$.

$\Symb^k(p)$ inherits its own fine filtration, $\Symb^{k,l}(p) := \Diff^{k,l}/\Diff^{k-1}$.  Thus
$$ \Symbf^{k,l}(p) = \Symb^{k,l}(p) / \Symb^{k,l-1}(p). $$
In general this filtration does not split under $\Con\bR^m$ \cite{FMP07}.

The finite dimensional simple maximal subalgebra $\da(\Con\bR^m)$ of $\Con\bR^m$ is the {\em conformal subalgebra.\/}  It is $\da(\VRm) \cap \Con\bR^m$, the image of the polynomials of degree~$\le 2$ under $\chi$, and is isomorphic to $\dsp_{m+1}$.  We denote it by $\ds_m$:
$$ \ds_m := \chi\bigl((dx dy dz)^{-1/(\ell+1)} \bigl\{f\in\Poly\bR^m:
   \degree(f) \le 2 \bigr\} \bigr) \cong \dsp_{m+1}. $$

A {\em conformal quantization\/} of $\Diff(\lp)$ is a symbol-preserving $\ds_m$-equivalence from $\Symbf(p)$ to $\Diff(\lp)$.  One of the main results of \cite{FMP07} is that for most $p$ ($p\not\in -\frac{1}{m+1}\bN$ is sufficient), the fine filtration of the full symbol module $\Symb^k(p)$ splits under $\ds_m$.  Coupling this with Proposition~\ref{tdmPQ} gives:

\begin{prop} \label{tdmCQ}
For $p\not\in \frac{1}{m+1}\bZ$, there is a unique conformal quantization
$$ \CQ_\lp: \Symbf(p) \to \Diff(\lp). $$
\end{prop}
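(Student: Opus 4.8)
The plan is to construct $\CQ_\lp$ by composing the projective quantization of Proposition~\ref{tdmPQ} with the $\ds_m$-splitting of the fine filtration supplied by \cite{FMP07}, and then to deduce uniqueness from the fact that the fine symbol modules are separated by their $\ds_m$-infinitesimal characters once $p\not\in\frac1{m+1}\bZ$.

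\textbf{Existence.}  First I would record that $\ds_m=\da(\VRm)\cap\Con\bR^m$ is contained in $\da(\VRm)=\da_m$.  Since $p\not\in\frac1{m+1}\bZ$ implies $p\not\in 1+\frac1{m+1}\bN$, Proposition~\ref{tdmPQ} provides the projective quantization $\PQ_\lp:\Symb(p)\to\Diff(\lp)$, a symbol-preserving $\da_m$-equivalence, hence \emph{a fortiori} a symbol-preserving $\ds_m$-equivalence splitting the order filtration.  Next I would note that this splitting is automatically adapted to the finer contact filtration: since $\Diff^{k-1}(\lp)\subseteq\Diff^{k,l}(\lp)$, the fine filtration refines the order filtration, and if $\bar x\in\Symb^{k,l}(p)=\Diff^{k,l}(\lp)/\Diff^{k-1}(\lp)$ is represented by $x\in\Diff^{k,l}(\lp)$, then $\PQ_\lp(\bar x)$ and $x$ have the same order-$k$ symbol, so $\PQ_\lp(\bar x)-x\in\Diff^{k-1}(\lp)\subseteq\Diff^{k,l}(\lp)$ and thus $\PQ_\lp\bigl(\Symb^{k,l}(p)\bigr)\subseteq\Diff^{k,l}(\lp)$.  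Transporting the contact filtration through $\PQ_\lp$ therefore yields the filtration of $\Symb(p)=\bigoplus_k\Symb^k(p)$ with steps $\bigoplus_{k'<k}\Symb^{k'}(p)\oplus\Symb^{k,l}(p)$, and splitting this $\ds_m$-equivariantly reduces to splitting, within each order-graded piece $\Symb^k(p)$, its induced fine filtration $\Symb^{k,0}(p)\subseteq\cdots\subseteq\Symb^{k,k}(p)=\Symb^k(p)$ under $\ds_m$.  That is precisely the splitting result of \cite{FMP07} quoted just before the statement, applicable because $p\not\in\frac1{m+1}\bZ$ forces $p\not\in-\frac1{m+1}\bN$.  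Composing $\PQ_\lp$ with the direct sum over $k$ of these splittings produces an $\ds_m$-equivalence
$$ \CQ_\lp:\ \Symbf(p)=\bigoplus_{k,l}\Symbf^{k,l}(p)\ \to\ \Diff(\lp), $$
symbol-preserving by construction.

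\textbf{Uniqueness.}  Suppose $\CQ_\lp$ and $\CQ'_\lp$ are two conformal quantizations.  Then $\Phi:=\CQ_\lp^{-1}\circ\CQ'_\lp$ is an $\ds_m$-automorphism of $\Symbf(p)$ preserving the contact filtration and inducing the identity on its associated graded, so $\Phi-\mathrm{id}$ is a sum of $\ds_m$-maps $\Symbf^{k,l}(p)\to\Symbf^{k',l'}(p)$ with $(k',l')$ strictly earlier in the filtration order.  It therefore suffices to show $\Hom_{\ds_m}\bigl(\Symbf^{k,l}(p),\Symbf^{k',l'}(p)\bigr)=0$ whenever $(k,l)\neq(k',l')$.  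Each fine symbol module $\Symbf^{k,l}(p)$, being a space of sections of a homogeneous bundle over the model contact manifold, is the dual of a generalized Verma module of $\ds_m\cong\dsp_{m+1}$ (induced from the parabolic attached to the contact structure), and hence carries a single $\ds_m$-infinitesimal character.  I would compute these characters from the lowest weights of the $\Symbf^{k,l}(p)$ by evaluating the Casimir together with the remaining generators of the center of $\dU(\ds_m)$ (equivalently, read them off the structure of the fine symbol modules described in \cite{Ov06, FMP07}), and verify that they are pairwise distinct for $p\not\in\frac1{m+1}\bZ$.  Distinct infinitesimal characters annihilate all the off-diagonal Hom spaces, whence $\Phi=\mathrm{id}$ and $\CQ_\lp=\CQ'_\lp$.

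The step I expect to be the main obstacle is this last separation of the fine symbol modules by infinitesimal character.  Over $\bR^m$ a single Casimir eigenvalue already distinguishes the coarse symbol modules $\Symb^k(p)$, but here the two indices $(k,l)$ must be resolved simultaneously, so one must extract enough invariants from the full center of $\dU(\dsp_{m+1})$ — or lean on the detailed description of the $\Symbf^{k,l}(p)$ in \cite{Ov06, FMP07} — to rule out any coincidence outside $\frac1{m+1}\bZ$; such a coincidence would be exactly the sort of contact resonance the hypothesis on $p$ is designed to exclude.
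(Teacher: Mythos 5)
Your argument matches the paper's, which derives the proposition in a single sentence by coupling the fine-filtration splitting of \cite{FMP07} with Proposition~\ref{tdmPQ}; your verification that $\PQ_\lp$ respects the fine filtration and your uniqueness argument via separation of the $\ds_m$-infinitesimal characters of the $\Symbf^{k,l}(p)$ are exactly the natural unpacking of that coupling. The infinitesimal-character separation you flag as the main obstacle is precisely what the hypothesis $p\notin\frac{1}{m+1}\bZ$ (stronger than the mere union of the two exclusions $p\in 1+\frac{1}{m+1}\bN$ and $p\in-\frac{1}{m+1}\bN$) is designed to guarantee, and the paper likewise delegates it to \cite{FMP07}.
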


Consider the five problems of Section~\ref{PQs} with $\Con\bR^m$ replacing $\Vec\bR^m$ and $\CQ$ replacing $\PQ$.  Problem~1 is largely open and the rest are completely open.  Regarding Problem~3, we expect that as usual, most if not all 1-cocycles appear as matrix entries on the first few superdiagonals in Problem~1.

\subsection*{Problem~1}
Here $\pi^\lp$ is the \r\ $\CQ^{-1}_\lp \circ \sigma^\lp \circ \CQ_\lp$ of $\Con\bR^m$ on $\Symbf(p)$.  Its ``matrix entries'' are the constituent maps
$$ \pi^\lp_{ij;\th ab}: \Con\bR^m \to \Hom_\bC\bigl(\Symbf^{j,b}(p),\ \Symbf^{i,a}(p)\bigr). $$
The analog of Lemma~\ref{tdmpiij} is immediate:

\begin{lemma} \label{tdmpiijab}
\begin{enumerate}
\item[(a)]  $\pi^\lp_{ij;\th ab} = 0$ for $i>j$, and also for $i=j$ and $a>b$.
\item[(b)]  $\pi^\lp_{ii;\th aa} = \sigma^p_{i,a}$, the action of\/ $\Con\bR^m$ on\/ $\Symb_\fine^{i,a}(p)$.
\item[(c)]  $\pi^\lp_{ij;\th ab}$ is an $\ds_m$-relative 1-cochain for $i<j$, and also for $i=j$ and $a<b$.
\end{enumerate}
\end{lemma}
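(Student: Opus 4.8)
The plan is to follow the proofs of Lemmas~\ref{piij} and~\ref{tdmpiij} essentially verbatim, replacing the order filtration by the fine filtration $\{\Diff^{k,l}(\lp)\}$ and $\da_m$ by $\ds_m$. The three inputs are: the $\Con\bR^m$-invariance of the fine filtration, so that $\sigma^\lp$ is a filtered action; the fact that $\CQ_\lp$ is symbol-preserving, \ie\ it carries the summand $\Symbf^{k,l}(p)$ into $\Diff^{k,l}(\lp)$ and induces the identity on $\Symbf^{k,l}(p) = \Diff^{k,l}/\Diff^{k,l-1}$; and the fact that $\CQ_\lp$ is an $\ds_m$-equivalence from $\Symbf(p)$ onto $\bigl(\Diff(\lp),\sigma^\lp\bigr)$. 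First I would fix the order on index pairs: ordering the $\Diff^{k,l}(\lp)$ by inclusion places them in lexicographic order with $k$ primary, since $\Diff^{k,k} = \Diff^k$ while $\Diff^{k-1} = \Diff^{k-1,k-1} \subseteq \Diff^{k,0}$. Thus ``$(i,a)<(j,b)$'' means $i<j$, or $i=j$ and $a<b$. Since $\CQ_\lp$ is filtered and induces the identity on the associated graded, an easy induction up the filtration shows that $\CQ_\lp$ restricts to an isomorphism $\bigoplus_{(k',l')\le(k,l)}\Symbf^{k',l'}(p)\to\Diff^{k,l}(\lp)$ for each $(k,l)$; in particular $\CQ_\lp^{-1}$ is filtered as well.

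Part~(a) is then immediate: $\pi^\lp = \CQ_\lp^{-1}\circ\sigma^\lp\circ\CQ_\lp$ is a composition of maps each preserving $\bigoplus_{(k,l)\le(j,b)}\Symbf^{k,l}(p)$ for every $(j,b)$, so its $(i,a),(j,b)$ block vanishes unless $(i,a)\le(j,b)$. Part~(b) holds because $\pi^\lp_{ii;\th aa}$ is by construction the map induced by $\pi^\lp$ on the graded piece $\Symbf^{i,a}(p)$, and since $\CQ_\lp$ and $\CQ_\lp^{-1}$ induce the identity there, this is $\sigma^p_{i,a}$.

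For part~(c), restrict to $X\in\ds_m$. Since $\CQ_\lp$ is an $\ds_m$-isomorphism and the natural $\ds_m$-action on $\Symbf(p)$ respects the fine grading, $\pi^\lp(X)$ equals that natural action and hence preserves each $\Symbf^{i,a}(p)$; thus $\pi^\lp_{ij;\th ab}(X)=0$ for $(i,a)\ne(j,b)$, which is the ``zero on $\ds_m$'' assertion. For $\ds_m$-covariance I would take the $(i,a),(j,b)$ block of the representation identity $\pi^\lp([Y,X]) = [\pi^\lp(Y),\pi^\lp(X)]$ with $Y\in\ds_m$, $X\in\Con\bR^m$; since $\pi^\lp(Y)$ is block-diagonal this reads
$$ \pi^\lp_{ij;\th ab}([Y,X]) = \sigma^p_{i,a}(Y)\circ\pi^\lp_{ij;\th ab}(X) - \pi^\lp_{ij;\th ab}(X)\circ\sigma^p_{j,b}(Y), $$
which says exactly that $\pi^\lp_{ij;\th ab}$ is $\ds_m$-equivariant from the adjoint module to $\Hom_\bC\bigl(\Symbf^{j,b}(p),\Symbf^{i,a}(p)\bigr)$. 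Vanishing on $\ds_m$ together with $\ds_m$-covariance is the definition of an $\ds_m$-relative 1-cochain, so part~(c) follows. There is no serious obstacle: the statement is purely formal given the properties of $\CQ_\lp$ furnished by Propositions~\ref{tdmPQ} and~\ref{tdmCQ}. The one point deserving care is that $\CQ_\lp$ matches the two fine filtrations term by term, so that $\CQ_\lp^{-1}$ is filtered and $\pi^\lp$ is genuinely block-triangular; but this is immediate from symbol-preservation, after which everything drops out mechanically as in the proof of Lemma~\ref{tdmpiij}.
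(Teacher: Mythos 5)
Correct, and essentially the same approach as the paper — though the paper gives no proof at all (it states the lemma as the ``immediate'' analog of Lemma~\ref{tdmpiij}, which in turn is the analog of Lemma~\ref{piij}), whereas you unpack the same three observations explicitly: invariance of the fine filtration under $\sigma^\lp$, symbol-preservation of $\CQ_\lp$, and $\ds_m$-equivariance of $\CQ_\lp$. Your care about the lexicographic order on $(k,l)$ and the verification that $\CQ_\lp^{-1}$ is filtered is the one genuinely new piece of bookkeeping in the contact case versus the $\VRm$ case, and you handle it correctly.
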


The progress toward computing the $\pi^\lp_{ij;\th ab}$ so far consists mainly in proving that certain of them vanish.  There are essentially three results.  First, it is observed in \cite{FMP07} that contraction with the contact form $\omega_m$ is a $\Con\bR^m$-surjection from $\Symb^k(p)$ to $\Symb^{k-1}(p-\frac{1}{\ell+1})$ with kernel $\Symbf^{k,0}(p)$.  It follows that for $a\ge 1$, $\pi^\lp_{kk;\th ab}$ is (when viewed properly) equal to $\pi^{\lambda,\th p-1/(\ell+1)}_{k-1,k-1;\th a-1,b-1}$.  (Keep in mind that $\Symb^k(p)$ and hence also $\pi^\lp_{kk;\th ab}$ are independent of $\lambda$.)

It is easy to verify $\pi^\lp_{11;\th 01}=0$, whence the above observation yields 
$$ \pi^\lp_{kk;\th k-1,k}=0 \quad \mbox{\rm for $k\ge 1$.} $$

The other two results concern entries $\pi^\lp_{ij;\th ab}$ with $i<j$, and will be published in \cite{CO}.  So far they have been verified only for $\Con\bR^3$.  The first of them is
\begin{equation*}
   \pi^\lp_{k-1,k;\th k-1,b} = 0 \quad \mbox{\rm for $k\ge 1$ and $0\le b\le k$.}
\end{equation*}
In other words, at $m=3$, $\Diff^k / \Diff^{k-1, k-2}$ splits as $\Symb^k \oplus \Symbf^{k-1, k-1}$ under $\Con\bR^3$.  Put geometrically, this says that there is a $\Con\bR^3$-invariant purely contact subsymbol.  In light of the fact that at $m=1$, $\VR$ and $\Con\bR$ are equal and $\Tan\bR=0$, this generalizes the $\VR$-invariant subsymbol discussed in Section~\ref{R} (see the length~2 case of Problem~4 there).

The last result (again, only verified for $\Con\bR^3$) is as follows: 
\begin{eqnarray*}
   \pi^\lp_{k-1,k;\th ab} &=& 0 \quad \mbox{\rm for $k\ge 1$ and $a\ge b+1$,} \\
   \pi^\lp_{ij;\th ab} &=& 0 \quad \mbox{\rm for $i\le j-2$ and $a\ge b+2$.}
\end{eqnarray*}
This has the following curious consequence: $\Diff$ has a $\Con\bR^3$-invariant filtration $\Diff^{(K)}$, defined by (for $l\ge k$, $\Symb^{k,l}$ means $\Symb^k$)
$$ \Diff^{(K)}(\lp) = \CQ_\lp\Bigl( \bigoplus_{i=0}^K \Symb^{K-i, \lf i/2 \rf}(p) \Bigr) $$
Intersecting this with the usual order filtration gives a bi-filtration of $\Diff$.  

Observe that the \sq\ $\Diff^{(K)}/ \Diff^{(K-1)}$ is a $\Con\bR^3$-module with composition series $\Symbf^{K,0},\ \Symbf^{K-2,1},\ \Symbf^{K-4,2}, \ldots$.  It might be interesting to classify the equivalence classes of the low-length truncations of these modules.

\def\pxi{D_\xi}
\def\px{D_x}

\section{Con$\th\bR^{1|1}$}
We conclude with a discussion of the contact structure of the superline $\Roo$.  The five problems of Section~\ref{PQs} make sense for $\Vec\bR^{m|n}$ and $\Con\bR^{2\ell+1 |n}$, and the methods and results for $\Con\Roo$ are closely parallel to those for $\VR$.

Let $x$ and $\xi$ be the even and odd coordinates, respectively.  Set $D := \pxi + \xi\px$ and $\o D := \pxi - \xi\px$.  (N.B.: $D^2 = \px = -\o D^2$.)  The contact form $\omega$ is $dx + \xi d\xi$, and its nonintegrable distribution is $\Tan\Roo := (\Poly\Roo)\o D$.  The conformal \lsa\ $\Con\Roo$ is the $\VRoo$-stabilizer of $\Tan\Roo$.  It is generated by its odd part, $(\Con\Roo)_\odd = \bC[x] D$.  As before, $\VRoo = \Tan\Roo \oplus \Con\Roo$.

$\Con\Roo$ is also the stabilizer of $(\Poly\Roo)\omega$, which is the space of sections of a line bundle.  We define the \tdm\ $F(\lambda)$ to be $(\Poly\Roo)\omega^\lambda$, the $\lambda^\thup$ scalar power of this bundle.  There is a $\Con\Roo$-equivalence $\chi: F(-1) \to \Con\Roo$, defined on odd elements by $\omega^{-1}\xi g(x) \mapsto \frac{1}{2} gD$.

The space $\Diff(\lp)$ of \dog s from $F(\lambda)$ to $F(\lambda+p)$ is spanned over $\Poly\Roo$ by $\omega^p \{ \o D^i: i\in\bN\}$.  One checks that $g(x) D$ acts on $F(\lambda)$ by the \dog\ $gD + 2 \lambda \xi g'$.  In this context, the {\em fine filtration\/} was introduced in \cite{GMO07}.  It is the $\Con\Roo$-invariant $\bN/2$-filtration
$$ \Diff^k(\lp) := \Span_{\Poly\Roo} \omega^p \bigl\{\o D^i: 0\le i\le 2k \bigr\},
   \quad k\in\bN/2. $$
Its symbol modules $\Symb^k_\fine(p) := \Diff^k(\lp) / \Diff^{k-1/2}$ are $\Con\Roo$-equivalent to $F(p-k)^{2k\Pi}$, where $\Pi$ is the parity functor.  Set $\Symb_\fine(p) := \bigoplus_{\bN/2} \Symb^k_\fine(p)$.

The conformal subalgebra $\ds_\oo$ is again $\chi$ applied to the degree~$\le 2$ polynomials.  It is isomorphic to $\osp_{1|2}$.  A {\em conformal quantization\/} is an even symbol-preserving $\ds_\oo$-equivalence from $\Symb_\fine$ to $\Diff$.  A Casimir operator argument gives:

\begin{prop}
For $p\not\in \frac{1}{2}\bZ^+$, there is a unique conformal quantization
$$ \CQ_\lp: \Symb_\fine(p) \to \Diff(\lp). $$
\end{prop}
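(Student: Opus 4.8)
The plan is to carry out the Casimir-operator argument behind Propositions~\ref{PQ}, \ref{tdmPQ}, and~\ref{tdmCQ}, now for the Lie superalgebra $\ds_\oo\cong\osp_{1|2}$ and the $\bN/2$-graded fine filtration.

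First I would fix the $\ds_\oo$-module structure of the fine symbol modules. By construction $\Symb^k_\fine(p)\cong F(p-k)^{2k\Pi}$ as $\Con\Roo$-modules, hence as $\ds_\oo$-modules; restricted to $\ds_\oo$, $F(\mu)$ is the dual of an $\ds_\oo$-relative Verma module with lowest-weight vector $\omega^\mu$ (a short computation with $\chi$ gives that the Cartan of $\ds_\oo$ acts on $\omega^\mu$ by~$\mu$ and on the lowest-weight vector $x^2D$ of the $\ds_\oo$-module $\Con\Roo/\ds_\oo$ by~$3/2$). Since the quadratic Casimir $\Omega$ of $\ds_\oo$ is even and central it acts on $F(\mu)$, and equally on $F(\mu)^{c\Pi}$ for every parity shift $c$, by one scalar $\kappa(\mu)$, a nonconstant quadratic symmetric about $\mu=1/2$ (one computes $\kappa(\mu)=\mu^2-\mu$, as over $\bR$). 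Thus $\Omega$ preserves the fine filtration of $\Diff(\lp)$ and acts on $\Symb^k_\fine(p)$ by $\kappa(p-k)$.

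The heart of the argument is to show that, for $p\notin\frac{1}{2}\bZ^+$, the operator $\Omega$ acts semisimply on each $\Diff^k(\lp)$ with effectively simple spectrum. Write the transferred $\Con\Roo$-action as an upper-triangular matrix $\pi^\lp_{jk}$; off the diagonal these are $\ds_\oo$-relative 1-cochains (cf.\ Lemmas~\ref{tdmpiij} and~\ref{tdmpiijab}), each determined by its value on $x^2D$, which must be a weight-$3/2$ odd lowest-weight vector in $\Hom_\bC\bigl(\Symb^k_\fine(p),\Symb^j_\fine(p)\bigr)$. Matching this against the fine symbols of the hom-module forces $\pi^\lp_{jk}=0$ for $k-j\in\{1/2,1\}$ (the super-analog of the vanishing of $\pi^\lp_{i,i+1}$ over $\bR$), so only the jumps $k-j\ge 3/2$ can contribute to a nonsplit extension; and since $\kappa$ is symmetric about $1/2$, a coincidence $\kappa(p-k)=\kappa(p-j)$ forces $k+j=2p-1$. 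A case check — guided by~(\ref{piijbeta}) and the explicit scalars of \cite{Co05, CS04}, with the $k-j=3/2$ superdiagonal as the delicate case — then shows that the coincidences surviving the vanishing of the cochains all lie at $p\in\frac{1}{2}\bZ^+$. Consequently, for $p\notin\frac{1}{2}\bZ^+$, $\Omega$ is semisimple on each $\Diff^k(\lp)$ and there it is diagonalized by the fine symbol modules. I expect this middle step — the precise determination of the $\osp_{1|2}$-Casimir together with the parity-and-fine-order bookkeeping confining the obstructions to $\frac{1}{2}\bZ^+$ — to be the main obstacle; the rest is formal.

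Granting it, one finishes as follows. Decompose $\Diff(\lp)=\bigoplus_k W_k$ into the $\Omega$-eigenspaces; each $W_k$ sits inside $\Diff^k(\lp)$ and meets $\Diff^{k-1/2}(\lp)$ trivially, so the symbol map restricts to an isomorphism of $\ds_\oo$-modules $W_k\to\Symb^k_\fine(p)$, and defining $\CQ_\lp$ on $\Symb^k_\fine(p)$ to be its inverse yields a symbol-preserving $\ds_\oo$-equivalence $\Symb_\fine(p)\to\Diff(\lp)$; since $\Omega$ is even the $W_k$ are $\bZ/2$-graded, so $\CQ_\lp$ is automatically even. If $\CQ'_\lp$ is another such, then $(\CQ'_\lp)^{-1}\circ\CQ_\lp$ is an even $\ds_\oo$-automorphism of $\Symb_\fine(p)$ carrying $\Symb^k_\fine(p)$ into $\bigoplus_{j\le k}\Symb^j_\fine(p)$ and inducing the identity on $\Symb^k_\fine(p)$; since it intertwines $\Omega$, whose eigenvalue $\kappa(p-k)$ is attained within $\bigoplus_{j\le k}\Symb^j_\fine(p)$ only on $\Symb^k_\fine(p)$, it maps $\Symb^k_\fine(p)$ to itself and must therefore be the identity. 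Hence $\CQ_\lp$ is unique.
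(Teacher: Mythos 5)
Your overall strategy — a Casimir eigenvalue argument in the spirit of Propositions~\ref{PQ} and~\ref{tdmPQ}, filtered through the fine symbol modules and the vanishing of low-order $\ds_\oo$-relative 1-cochains — is the right shape, and your closing uniqueness argument is sound granting the eigenvalue input. But that input has a computational error and, more seriously, a missing idea. The error: the Casimir of $\ds_\oo\cong\osp_{1|2}$ on $F(\mu)$ is not $\mu^2-\mu$. For $\osp_{1|2}$ the odd positive root enters $\rho$ with a minus sign, so $\rho=\frac{1}{4}\alpha$ and the Casimir eigenvalue is symmetric about $\mu=\frac{1}{4}$, not $\frac{1}{2}$. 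The paper's own data confirms this: $\SBol_q$ is a map from $F(\frac{1}{4}-\frac{q}{2})$, and the conjugation-invariant parameter in Problem~3 for $\Con\Roo$ is $C=(\lambda+\frac{p}{2}-\frac{1}{4})^2$, the $\Roo$-analog of~(\ref{C}). So the coincidence condition is $k+j=2p-\frac{1}{2}$, not $k+j=2p-1$, and the arithmetic you sketch from there lands in the wrong set.

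The missing idea: even with the correct center, the quadratic Casimir alone does not isolate $\frac{1}{2}\bZ^+$. Coincidences $k+j=2p-\frac{1}{2}$ with $k-j\ge\frac{3}{2}$ and $k,j\in\bN/2$ occur for every $p\in\frac{1}{4}\bZ$ with $p\ge 1$; for instance at $p=\frac{5}{4}$ the pair $(k,j)=(2,0)$ coincides, and $q=2$ does admit a nonzero even $\ds_\oo$-relative 1-cochain, so your cochain filter does not dispose of it. What closes the gap is the ghost Casimir (Scasimir) of $\osp_{1|2}$: the odd anticentral element $Q\in\dU(\ds_\oo)$ whose square is a polynomial in the quadratic Casimir. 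On the $\bZ/2$-graded module $\Diff(\lp)$ the even operator $Q(-1)^F$ is $\ds_\oo$-linear and preserves the fine filtration, and it acts on $\Symb^k_\fine(p)\cong F(p-k)^{2k\Pi}$ by a scalar proportional to $(-1)^{2k}(p-k-\frac{1}{4})$: linear in $k$ and sign-reversing under $\Pi$. Equality of these scalars for $k\ne j$ in $\bN/2$ forces $2k\not\equiv 2j\pmod 2$ and $k+j=2p-\frac{1}{2}\in\frac{1}{2}+\bN$, i.e., exactly $p\in\frac{1}{2}\bZ^+$. This linear, parity-sensitive invariant is what makes a Casimir-type argument close for $\osp_{1|2}$; your write-up replaces it with a quadratic invariant that, as written, proves a weaker and mislocated statement.
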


The explicit formula for $\CQ_\lp$ was deduced in \cite{CMZ97} for $(\lambda,p)=(0,0)$, and in
\cite{GMO07} in general.  Problems~1 and~5 are largely solved in \cite{Co08}, and Problem~4 was reduced to computation.

\subsection*{Problem 1}
The matrix entries here are even $\ds_\oo$-maps
$$ \pi^\lp_{ij}: \Con\Roo \to \Hom\bigl(F(p-j)^{2j\Pi}, F(p-i)^{2i\Pi}\bigr),
   \quad i,\th j\in \bN/2. $$
As usual, the matrix is upper triangular with the tensor density actions on the diagonal.  The entries above the diagonal are $\ds_\oo$-relative 1-cochains.

Now $\Con\Roo / \ds_\oo$ is $\ds_\oo$-equivalent to $F(3/2)^\Pi$, with \lwv\ $\xi x^2 D$.  It follows that the space of $\Hom\bigl(F(\mu), F(\mu+q)\bigr)$-valued $\ds_\oo$-relative 1-cochains of $\Con\Roo$ is zero unless $q\in \frac{3}{2} + \frac{1}{2}\bN$, when it is 1-dimensional.  

Let us describe this space.  For $\mu+\nu\not\in -\bN/2$ and $k\in \bN/2$, there is up to a scalar a unique $\ds_\oo$-surjection from $F(\mu) \ot F(\nu)$ to $F(\mu+\nu+k)$, the {\em supertransvectant\/} $J^{\mu,\nu}_k$, which has parity $(-1)^{2k}$.  For $q\in \frac{1}{2} + \bN$ and $\mu = \frac{1}{4}-\frac{q}{2}$, there is up to a scalar a unique $\ds_\oo$-map from $F(\mu)$ to $F(\mu+q)$, the {\em super Bol operator\/} $\SBol_q := \omega^q \o D^{2q}$, which is odd.  The above space of $\ds_\oo$-relative 1-cochains is spanned by 
$$ \o\beta_q(\mu) := J^{3/2,\mu}_{q-3/2} \circ \bigl((\SBol_{5/2}\circ\chi) \ot 1\bigr). $$

In order to compute the matrix entries, it is necessary to consider the spaces of even $\ds_\oo$-relative $\Hom\bigl(F(\mu), F(\mu+q)^{2q\Pi}\bigr)$-valued 1-cochains with $q\in\bZ^+/2$.  This space is zero when $q$ is $\frac{1}{2}$ or~$1$.  Otherwise it is 1-dimensional and spanned by a cochain $\beta_q(\mu)$, which is $\o\beta_q(\mu)$ modified by an appropriate parity equivalence.  One checks easily that there are scalars $B^\lp_{ij}$ such that $\pi^\lp_{ij} = B^\lp_{ij} \beta_{j-i}(p-j)$.

These scalars may be computed by the method of Section~\ref{R}.  This is carried out in \cite{Co08}, although the formulas are fully simplified only when $q$ is $\frac{3}{2}$, $2$, and $\frac{5}{2}$.  This is sufficient, as these are the only $q$ for which $\beta_q(\mu)$ is generically a cocycle.

\subsection*{Problem 3}
Only $H^0$ and $H^1$ are fully known.  $H^1 F(\lambda)$ is zero except for:
$$ H^1 F(0) = \bC \omega \o D^2 \circ \chi, \quad
   H^1 F(1/2) = \bC \omega \o D^3 \circ \chi, \quad
   H^1 F(3/2) = \bC \omega \o D^5 \circ \chi. $$
The first of these is $D$-relative and affine-covariant, the second is affine-relative, and the third is $\ds_\oo$-relative.  Based on preliminary calculations for $n\le 3$, it is conjectured in \cite{Co08} that the affine-relative $n$-cohomology of $F(\lambda)$ is 1-dimensional when $\lambda$ is $n^2 \pm n/2$, and zero otherwise.

$H^1 \Diff(\lp)$ is computed in \cite{Co08}.  We will only give the affine-relative cohomology: it is 1-dimensional if $p=\frac{1}{2}$ and $\lambda=0$; $p=\frac{3}{2}$, $2$, or $\frac{5}{2}$; $p=3$ and $\lambda=-\frac{5}{2}$ or~$0$; or $p=4$ and $\lambda = -\frac{7}{4} \pm \frac{1}{4}\sqrt{33}$ (the $p=4$ cocycles are the analogs of the Feigin-Fuchs 1-cocycles of $\VR$).  Otherwise it is zero.  It is $\ds_\oo$-relative and spanned by $\o\beta_p(\lambda)$ whenever $C := (\lambda + \frac{p}{2} - \frac{1}{4})^2$ is nonzero.

Some information concerning $H^2 F(\lambda)$ and $H^2 \Diff(\lp)$ is given in \cite{Co08}.  It seems natural to guess that the picture for $H^n$ is broadly as in \cite{Go73} and \cite{FF80}; precise results would be highly interesting.

\subsection*{Problem 4}
Let $\SQ^k_l(\lp)$ be as in~(\ref{PDOSQ}).  Here $k, l\in\bN/2$ and the \sq\ is of length $2l$.  We allow \psdog s, so $l$ and $p-k$ are invariants and $\lambda$ and $p$ vary continuously.  The classification of the equivalence classes was outlined in \cite{Co08}; details will be forthcoming.  The most interesting cases occur in lengths~6 and~7, where there are continuous invariants.  There are also interesting lacunary \sq s.

\subsubsection*{Length 6}
One finds the usual discrete invariants, and a single continuous invariant like the first one in~(\ref{invts}).  In $(C,p)$ coordinates, the generic equivalence classes make up a pencil of conics through four fixed points.   The coordinate system in which these four points are inscribed in a circle has a form similar to that of its $\VR$-analog, and merits further study.

\subsubsection*{Length 7}
Since $\pi^\lp_{k-7/2,\th k}$ is not a cocycle, two \sq s are equivalent \iff\ their two pairs of length~6 \sq s are equivalent.  Hence there may be interesting finite equivalence classes here: the intersections of conics.

\subsection*{Problem 5}
The indecomposable modules composed of two \tdm s are classified by Problem~3.  In \cite{Co08}, several length~3 and~4 uniserial \sq s of $\Psi(\lp)$ are computed.  Exceptional tensor degrees arise in length~4.

\def\eightit{\it} 
\def\bib{\bf}
\bibliographystyle{amsalpha}

\begin{thebibliography}{CMZ97}

\bibitem[CMZ97]{CMZ97} {\sc P.~Cohen, Y.~Manin, D.~Zagier}, Automorphic pseudodifferential operators, in ``Algebraic Aspects of Integrable Systems,'' {\it Progr.\ Nonlinear Differential Equations Appl.\/} {\bib 26}, Birkh\"auser, Boston, 1997, 17--47.

\bibitem[Co01]{Co01} {\sc C.~Conley}, Bounded length~3 representations of the Virasoro Lie algebra, {\it Internat.\ Math.\ Res.\ Notices\/} {\bib 2001}, no.~12, 609--628.

\bibitem[Co05]{Co05} {\sc C.~Conley}, Bounded subquotients of pseudodifferential operator modules, {\it Comm.\ Math.\ Phys.\/} {\bib 257} (2005), no.~3, 641--657.

\bibitem[Co08]{Co08} {\sc C.~Conley}, Conformal symbols and the action of contact \vf s over the superline, arXiv~0712.1780, to appear in {\it J.~Reine Angew.\ Math.\/}

\bibitem[CS04]{CS04} {\sc C.~Conley, M.~Sepanski}, Singular projective bases and the affine Bol operator, {\it Adv.\ in Appl.\ Math.\/} {\bib 33} (2004), no.~1, 158--191.

\bibitem[CO]{CO} {\sc C.~Conley, V.~Ovsienko}, Differential operators on contact manifolds, in progress.

\bibitem[DO97]{DO97} {\sc C.~Duval, V.~Ovsienko}, Space of second order linear \dog s as a module over the Lie algebra of vector fields, {\it Adv.\ Math.\/} {\bib 132} (1997), no.~2, 316--333.

\bibitem[DO01]{DO01} {\sc C.~Duval, V.~Ovsienko}, Projectively equivariant quantization and symbol calculus: noncommutative hypergeometric functions, {\it Lett.\ Math.\ Phys.\/} {\bf 57} (2001) 61--67.

\bibitem[FF80]{FF80} {\sc B.~L.~Feigin, D.~B.~Fuchs}, Homology of the Lie Algebra of Vector Fields on the Line, {\it Funct.\ Anal.\ Appl.\/} {\bib 14} (1980), No.~3, 45--60.

\bibitem[FMP07]{FMP07} {\sc Y.~Fr\'egier, P.~Mathonet, N.~Poncin}, Decomposition of symmetric tensor fields in the presence of a flat contact projective structure, arXiv~0703922.

\bibitem[Ga00]{Ga00} {\sc H.\ Gargoubi}, Sur la g\'eom\'etrie de l'espace des op\'erateurs 
diff\'erentiels lin\'eaires sur $\Bbb R$, {\it Bull.\ Soc.\ Roy.\ Sci.\ Li\`ege\/} {\bib 69} (2000), no.~1, 21--47. 

\bibitem[GMO07]{GMO07} {\sc H.\ Gargoubi, N.\ Mellouli, V.\ Ovsienko}, Differential operators on the supercircle: conformally equivariant quantization and symbol calculus, {\it Lett.\ Math.\ Phys.\/} {\bf 79} (2007), no.~1, 51--65.

\bibitem[Go73]{Go73} {\sc L.~V.~Goncharova,} Cohomology of Lie algebras of formal vector fields on the line, {\it Funct.\ Anal.\ Appl.\/} {\bib 7} (1973), 91--97; ibid.\ {\bib 7} (1973), 194--203.

\bibitem[Le00]{Le00} {\sc P.~B.~A.~Lecomte}, On the cohomology of ${\rm sl}(m+1,\Bbb R)$ acting on differential operators and ${\rm sl}(m+1,\Bbb R)$-equivariant symbol, {\it Indag.\ Math.\ (N.S.)\/} {\bib 11} (2000), no. 1, 95--114. 

\bibitem[LMT96]{LMT96} {\sc P.\ Lecomte, P.\ Mathonet, E.\ Tousset}, Comparison of some modules of the Lie algebra of vector fields, {\it Indag.\ Math.\ N.S.\/} {\bib 7} (1996), no.~4, 461--471.

\bibitem[LO99]{LO99} {\sc P.\ Lecomte, V.\ Ovsienko}, Projectively invariant symbol calculus, {\it Lett.\ Math.\ Phys.\/} {\bib 49} (1999), no.~3, 173--196.

\bibitem[LO00]{LO00} {\sc P.\ Lecomte, V.\ Ovsienko}, Cohomology of the vector fields Lie algebra and modules of differential operators on a smooth manifold, {\it Compositio\ Math.\/} {\bib 124} (2000), 95--110.

\bibitem[MP92]{MP92} {\sc C.~Martin and A.~Piard}, Classification of the indecomposable bounded modules over the Virasoro Lie algebra with weightspaces of dimension not exceeding two, {\it Commun.\ Math.\ Phys.\/} {\bib 150} (1992), 465--493.

\bibitem[Ov06]{Ov06} {\sc V.\ Ovsienko}, Vector fields in the presence of a contact structure, {\it Enseign.\ Math.\/} {\bf 52} (2006), 215--229.

\end{thebibliography}

\end{document}